\theoremstyle{definition}
\newtheorem{definition}{Definition}[section]
\newtheorem{proposition}[definition]{Proposition}
\newtheorem{theorem}[definition]{Theorem}
\newtheorem{lemma}[definition]{Lemma}
\newtheorem{corollary}[definition]{Corollary}
\theoremstyle{remark}
\newtheorem{remark}[definition]{Remark}
\newcommand{\Z}{\mathbb{Z}}
\newcommand{\even}{\bar{0}}
\newcommand{\odd}{\bar{1}}
\newcommand{\LL}{\mathrm{L}}
\newcommand{\TT}{\mathrm{T}}
\newcommand{\GG}{\mathrm{G}}
\begin{document}
\title{The automorphism group functor of the $N=4$ Lie conformal superalgebra}
\author{Zhihua Chang\footnote{zhihuachang@gmail.com}\\{\small Department of Mathematics, Bar-Ilan University, Ramat-Gan, 52900, Israel.}}
\maketitle

\begin{abstract}
In this paper, we study the structure and representability of the automorphism group functor of the $N=4$ Lie conformal superalgebra over an algebraically closed field $k$ of characteristic zero.
\end{abstract}

\noindent Keywords: $N=4$ Lie conformal superalgebra, automorphism group, twisted form\\
\medskip

\noindent2000 MSC: 17B40, 14L15, 17B65

\section{Introduction}
Let $k$ be an algebraically closed field of characteristic zero and $A$ a finite-dimensional (associative, Lie, Jordan...) algebra over $k$. The automorphism group $\mathbf{Aut}(A)$ of $A$ is an affine group scheme of finite type over $k$, which is a representable functor from the category $k\text{\bf-rng}$ of commutative associative $k$--algebras to the category {\bf grp} of groups, assigning a commutative associative $k$--algebra $R$ the group of $R$--linear automorphisms of $A\otimes_kR$.

The geometric properties of $\mathbf{Aut}(A)$ are strongly related to the algebraic properties of $A$. For instance,
if $\mathbf{Aut}(A)$ is smooth and connected, then all twisted loop algebras based on $A$ are trivial, i.e., isomorphic to $A\otimes_kk[t^{\pm1}]$ as algebras over the ring $k[t^{\pm1}]$ (see \cite{P2005}).

In the case of Lie algebras, the centreless core of an affine Kac-Moody Lie algebra is a twisted loop algebra based on a finite-dimensional simple Lie algebra $\frak{g}$ over $k$ (see \cite[Chapters 7 and 8]{K1}). Based on the point of view taken in \cite{P2005}, such a twisted loop algebra is a twisted form of the untwisted loop algebra $\frak{g}\otimes_kk[t^{\pm1}]$ with respect to an \'{e}tale extension of $k[t^{\pm1}]$. These twisted forms are classified in \cite{P2005} by investigating the non-abelian \'{e}tale cohomology of the $k$--group scheme $\mathbf{Aut}(\frak{g})$. The case of  finite-dimensional simple Lie superalgebras was studied in \cite{GP1} and \cite{GP2} as well.

In \cite{KLP}, in order to address the results about the $N=2,4$ superconformal algebras of \cite{SS}, an analogous theory of twisted forms of differential conformal superalgebras is developed. To obtain the ``correct'' definition of conformal superalgebra over a ring, $k$--differential rings were considered as a replacement of commutative associative $k$--algebras. A $k$--differential ring is a pair $\mathcal{R}=(R,\delta)$ consisting of a commutative associative $k$--algebra $R$ and a $k$--linear derivation $\delta:R\rightarrow R$. $k$--differential rings form a category, denoted by $k\text{\bf-drng}$, where a morphism $f:\mathcal{R}=(R,\delta_R)\rightarrow\mathcal{S}=(S,\delta_S)$ is a homomorphism of $k$-algebras $f:R\rightarrow S$ such that $f\circ\delta_R=\delta_S\circ f$. For a conformal superalgebra $\mathcal{A}$ over $k$, its automorphism group functor $\mathbf{Aut}(\mathcal{A})$ is a functor from $k\text{\bf-drng}$ to the category of groups (denoted by {\bf grp}). We will review the precise definition of a Lie conformal superalgebra over $k$ and its automorphism group functor in Section~\ref{sec:basic}.

Unlike the case of usual finite-dimensional algebras, for an arbitrary Lie conformal superalgebra $\mathcal{A}$, $\mathbf{Aut}(\mathcal{A})$ fails to be representable. However, it is still significant for studying the Lie conformal superalgebra $\mathcal{A}$: it is pointed out in \cite{KLP} that the isomorphism classes of twisted forms of $\mathcal{A}\otimes_k\mathcal{R}$ with respect to a faithfully flat extension of $k$--differential rings $\mathcal{R}\rightarrow\mathcal{S}$ are parameterized by the non-abelian cohomology set $\mathrm{H}^1\left(\mathcal{S}/\mathcal{R},\mathbf{Aut}(\mathcal{A})_{\mathcal{R}}\right)$. In particular, a consideration on the extension\footnote{The $k$--differential ring $\widehat{\mathcal{D}}$ plays the role of a simply-connected cover of $\mathcal{D}$.} of $k$--differential rings:
$$\textstyle{\mathcal{D}:=(k[t^{\pm1}],\frac{d}{dt})\rightarrow\widehat{\mathcal{D}}:=(\lim\limits_{\longrightarrow_m}k[t^{\pm\frac{1}{m}}],\frac{d}{dt})}$$
will lead to the classification of twisted loop conformal superalgebras based on $\mathcal{A}$.

In order to completely classify twisted loop conformal superalgebras based on one of the $N=2$ and $N=4$ conformal superalgebras $\mathcal{A}$, the automorphism group $\mathbf{Aut}(\mathcal{A})(\widehat{\mathcal{D}})$ has been determined in \cite{KLP}. The automorphism group functors of the $N=1,2,3$ conformal superalgebras were studied in \cite{CP} as well. Interestingly, for the $N=1,2,3$ Lie conformal superalgebra $\mathcal{K}_N$, $\mathbf{Aut}(\mathcal{K}_N)$ has a subgroup functor $\mathbf{GrAut}(\mathcal{K}_N)$ such that $\mathbf{GrAut}(\mathcal{K}_N)(\mathcal{R})=\mathbf{Aut}(\mathcal{K}_N)(\mathcal{R})$ for $\mathcal{R}=(R,\delta)$ with $R$ an integral domain, and $\mathbf{GrAut}(\mathcal{K}_N)$ is the group functor obtained by lifting the $k$--group scheme $\mathbf{O}_N$ of $N\times N$--orthogonal matrices via the forgetful functor $\mathrm{fgt}:k\text{\bf-drng}\rightarrow k\text{\bf-rng}, \mathcal{R}=(R,\delta)\mapsto R$, i.e.,
$$\mathbf{GrAut}(\mathcal{K}_N):k\text{\bf-drng}\overset{\mathrm{fgt}}{\longrightarrow}
k\text{\bf-rng}\overset{\mathbf{O}_N}{\longrightarrow}\text{\bf grp}.$$
We call a functor from $k\text{\bf-drng}$ to $\text{\bf grp}$ {\it forgetfully representable} if it is obtained by lifting a $k$-group scheme using the forgetful functor.

It has been conjecture in \cite{CP} that the automorphism group functor $\mathbf{Aut}(\mathcal{F})$ of the $N=4$ Lie conformal superalgebra $\mathcal{F}$ also satisfies similar properties: \begin{enumerate}
\item[(1)] It has a subgroup functor $\mathbf{GrAut}(\mathcal{F})$ such that $\mathbf{GrAut}(\mathcal{F})(\mathcal{R})=\mathbf{Aut}(\mathcal{F})(\mathcal{R})$ if $\mathcal{R}=(R,\delta)$ such that $R$ is an integral domain.
\item[(2)] $\mathbf{GrAut}(\mathcal{F})$ is forgetfully representable by
$$\frac{\mathbf{SL}_2\times\mathbf{SL}_2(k)}{\langle-I_2,-I_2\rangle},$$
where $\mathbf{SL}_2$ is the group scheme of $2\times2$ matrices with determinant $1$, and $\mathbf{SL}_2(k)$ is the constant group scheme.
\end{enumerate}

In this paper, we will show that the assertion (1) is true in Section~\ref{sec:auto}, but the assertion (2) fails as we shall see in Remark~\ref{rmk:cex_fgt_rep}. Nonetheless, the subgroup functor $\mathbf{GrAut}(\mathcal{F})$ is still presented by certain affine group schemes. The key ingredient of this paper is to study the relationship between $\mathbf{GrAut}(\mathcal{F})$ and certain affine group schemes (c.f. Section~\ref{sec:grauto}). Finally, we will provide an example to show that the definition of the subgroup functor $\mathbf{GrAut}(\mathcal{A})$ of a Lie conformal superalgebra $\mathcal{A}$ depends on the choice of generators of $\mathcal{A}$ in Section~\ref{sec:grautodef}.

\bigskip

\noindent{\bf Notation: }Throughout this paper, $\Z,\Z_+$, and $\Bbb Q$ denote the sets of integers, non-negative integers, and rational numbers, respectively.

We will always assume $k$ is an algebraically closed field of characteristic zero, $\mathbf{i}$ is $\sqrt{-1}\in k$, and $k[\partial]$ is the polynomial ring over $k$ in one variable $\partial$. We will use $\epsilon_{ijl}$ to denote the sign of a cycle $(ijl)$. For a commutative associative $k$-algebra $R$, $\mathrm{Mat}_2(R)$ will denote the associative algebra of all $2\times2$--matrices with entries in $R$.

We will also use the standard notation of the group scheme $\boldsymbol{\mu_2}$, which is a functor from $k\text{\bf-rng}$ to $\text{\bf grp}$ assigning each object $R$ in $k\text{\bf-rng}$ the group of square roots of unity in $R$, i.e., $\boldsymbol{\mu_2}(R)=\{r\in R|r^2=1\}$.

\section{Preliminaries}
\label{sec:basic}
In this section, we will review some basic concepts of Lie conformal superalgebras over $k$, their base change with respect to an extension of $k$--differential rings, and their automorphism group functors. We then describe the $N=4$ Lie conformal superalgebra using generators and relations. We also introduce some notation to simplify our computations for automorphism groups.

\begin{definition}[{\cite[definition~2.7]{K}}]
A {\it Lie conformal superalgebra} over $k$ is a $\Z/2\Z$--graded $k[\partial]$--module $\mathcal{A}=\mathcal{A}_{\even}\oplus\mathcal{A}_{\odd}$ equipped with a $k$--bilinear operation $-_{(n)}-:\mathcal{A}\otimes_{k}\mathcal{A}\rightarrow\mathcal{A}$ for each $n\in\Z_+$ satisfying the following axioms:
\begin{quote}
\begin{itemize}
\item[$\mathrm{(CS0)}$] $a_{(n)}b=0$ for $n\gg0$,
\item[$\mathrm{(CS1)}$] $\partial(a)_{(n)}b=-na_{(n-1)}b$,
\item[$\mathrm{(CS2)}$] $a_{(n)}b=-p(a,b)\sum_{j\in\Z_+}(-1)^{j+n}\partial^{(j)}(b_{(n+j)}a)$,
\item[$\mathrm{(CS3)}$] $a_{(m)}(b_{(n)}c)=\sum_{j=0}^m\binom{m}{j}(a_{(j)}b)_{(m+n-j)}c+p(a,b)b_{(n)}(a_{(m)}c)$,
\end{itemize}
\end{quote}
where $\partial^{(j)}=\partial^{j}/j!, j\in\Bbb Z_+$ and $p(a,b)=(-1)^{p(a)p(b)}$, $p(a)$ (resp. $p(b)$) is the parity of $a$ (resp. $b$).
\end{definition}

We also use the conventional $\lambda$-bracket notation:
$$[a_\lambda b]=\sum\limits_{n\in\Z_+}\frac{1}{n!}\lambda^na_{(n)}b,$$
where $\lambda$ is a variable and $a,b\in\mathcal{A}$.

Given a Lie conformal superalgebra $\mathcal{A}$ over $k$ and a $k$--differential ring $\mathcal{R}=(R,\delta)$, we can form a new Lie conformal superalgebra $\mathcal{A}\otimes_k\mathcal{R}$ (the base change of $\mathcal{A}$ with respect to $k\rightarrow\mathcal{R}$) as follows: The underlying $\Z/2\Z$--graded $k$--vector space of $\mathcal{A}\otimes_k\mathcal{R}$ is $\mathcal{A}\otimes_kR$, the $k[\partial]$--module structure is given by $\partial_{\mathcal{A}\otimes_k\mathcal{R}}=\partial_{\mathcal{A}}\otimes1+1\otimes\delta$, and the $n$-th product for each $n\in\Z_+$ is given by:
$$(a\otimes r)_{(n)}(b\otimes s)=\sum\limits_{j\in\Z_+}(a_{(n+j)}b)\otimes\delta^{(j)}(r)s,$$
where $a,b\in\mathcal{A}, r,s\in R$ and $\delta^{(j)}=\delta^j/j!$.

It is pointed out in \cite{KLP} that $\mathcal{A}\otimes_k\mathcal{R}$ is not only a Lie conformal superalgebra over $k$, but also an $\mathcal{R}$--Lie conformal superalgebra (c. f. \cite[definition~1.3]{KLP}). The $\mathcal{R}$--structure yields the definition of its $\mathcal{R}$--automorphisms: an {\it$\mathcal{R}$--automorphism} of $\mathcal{A}\otimes_k\mathcal{R}$ is an automorphism\footnote{Throughout this paper, an automorphism of a $\Z/2\Z$--graded $R$--module is required to be an automorphism of degree $\bar{0}$, i.e., it preserves the $\Z/2\Z$--gradation.}  of the $\Z/2\Z$--graded $R$--module $\mathcal{A}\otimes_kR$ which preserves all $n$-th products and commutes with $\partial_{\mathcal{A}\otimes_k\mathcal{R}}$. All $\mathcal{R}$--automorphisms of $\mathcal{A}\otimes_k\mathcal{R}$ form a group, denoted by $\mathrm{Aut}_{\mathcal{R}\text{-conf}}(\mathcal{A}\otimes_k\mathcal{R})$. These automorphism groups are functorial in $\mathcal{R}$. Hence, one defines a functor from the category of $k$--differential rings to the category of groups:
$$\mathbf{Aut}(\mathcal{A}):k\text{\bf-drng}\rightarrow\text{\bf grp},\quad \mathcal{R}\mapsto\mathrm{Aut}_{\mathcal{R}\text{-conf}}(\mathcal{A}\otimes_{k}\mathcal{R}).$$
It is called the {\it automorphism group functor} of $\mathcal{A}$.
\bigskip

In this paper, we focus on the $N=4$ Lie conformal superalgebra $\mathcal{F}$. As a $\Z/2\Z$--graded $k[\partial]$--module, $\mathcal{F}=\mathcal{F}_{\even}\oplus\mathcal{F}_{\odd}$,
where
\begin{align*}
\mathcal{F}_{\even}&=k[\partial] \LL\oplus k[\partial] \TT^1\oplus k[\partial] \TT^2\oplus k[\partial]\TT^3, \\ \mathcal{F}_{\odd}&=k[\partial] \GG^1\oplus k[\partial] \GG^2\oplus k[\partial] \overline{\GG}^1\oplus k[\partial]\overline{\GG}^2.
\end{align*}
The $\lambda$--bracket on $\mathcal{F}$ is given by
\begin{center}
\begin{tabular}{ll}
$[\LL_\lambda \LL]=(\partial+2\lambda)\LL$,
&$[\LL_\lambda \TT^i]=(\partial+\lambda)\TT^i$,\\[1ex]
$[\LL_\lambda \GG^p]=\left(\partial+\frac{3}{2}\lambda\right)\GG^p$,
&$\left[\LL_\lambda \overline{\GG}^p\right]=\left(\partial+\frac{3}{2}\lambda\right)\overline{\GG}^p$,\\[1ex]
$[{\TT^i}_\lambda \TT^j]=\mathbf{i} \epsilon_{ijl}\TT^l$,
&$[{\GG^p}_\lambda \GG^q]=\left[\hspace*{0.5mm}{\overline{\GG}^p}_\lambda\overline{\GG}^q\right]=0$,\\
$[{\TT^i}_\lambda \GG^p]=-\frac{1}{2}\sum\limits_{q=1}^2\sigma_{pq}^i\GG^q$,
&$\left[{\TT^i}_\lambda \overline{\GG}^p\right]=\frac{1}{2}\sum\limits_{q=1}^2\sigma_{qp}^i\overline{\GG}^q$,\\[0.5ex]
\multicolumn{2}{l}{$\left[{\GG^p}_\lambda \overline{\GG}^q\right]=2\delta_{pq}\LL-2(\partial+2\lambda)\sum\limits_{i=1}^3\sigma_{pq}^i\TT^i$,}
\end{tabular}
\end{center}
where $i,j=1,2,3$, $p,q=1,2$ and $\sigma^i, i=1,2,3$ are the Pauli spin matrices:
$$\sigma^1=\begin{pmatrix}0&1\\1&0\end{pmatrix},\quad \sigma^2=\begin{pmatrix}0&-\mathbf{i}\\ \mathbf{i}&0\end{pmatrix},
\quad \sigma^3=\begin{pmatrix}1&0\\0&-1\end{pmatrix}.$$

The even part $\mathcal{F}_{\even}$ has a sub Lie conformal algebra $$\mathcal{B}=k[\partial]\TT^1\oplus k[\partial]\TT^2\oplus k[\partial]\TT^3,$$ which is isomorphic to the current Lie conformal algebra\footnote{Every finite-dimensional Lie superalgebra $\frak{g}$ over $k$ associates a current Lie conformal superalgebra $\mathrm{Cur}(\frak{g})=k[\partial]\otimes_k\frak{g}$ with the $n$-th product: $a_{(0)}b=[a,b]$ and $a_{(n)}b=0$ for $n\geqslant1$ \cite[Example~2.7]{K}.} $\mathrm{Cur}(\frak{sl}_2(k))$.

Analogous to the cases where $N=1,2,3$, the automorphism group functor $\mathbf{Aut}(\mathcal{F})$ has a subgroup functor $\mathbf{GrAut}(\mathcal{F})$ defined as follows: fix the $k$--vector space
\begin{equation}
V=\mathrm{span}_k\left\{\LL,\TT^1,\TT^2,\TT^3,\GG^1,\GG^2,\overline{\GG}^1,\overline{\GG}^2\right\},\label{eq:vectorspace}
\end{equation}
and define a subgroup of $\mathbf{Aut}(\mathcal{A})(\mathcal{R})$ for each $k$--differential ring $\mathcal{R}=(R,\delta)$:
$$\mathbf{GrAut}(\mathcal{F})(\mathcal{R})=\left\{\phi\in\mathbf{Aut}(\mathcal{F})(\mathcal{R})|\phi(V\otimes_k R)\subseteq V\otimes_kR\right\}.$$
This construction is functorial in $\mathcal{R}$.

To simplify computations for these automorphism groups, we write $\widehat{\partial}=\partial_{\mathcal{F}\otimes_k\mathcal{R}}$ for short and introduce the following notation\footnote{These notations allow us to rewrite all commutative relations of $\mathcal{F}$ using matrix multiplication, which lead to nice formulas for describing the automorphisms of $\mathcal{F}$}: for $r\in R$, $X=\begin{pmatrix}x&y\\ z&-x\end{pmatrix}\in\frak{sl}_2(R)$ and $M=\begin{pmatrix}a&b\\c&d\end{pmatrix}\in \mathrm{Mat}_2(R)$, we set
\begin{align*}
&\LL(r):=\LL\otimes r,\\
&\TT(X):=\TT^1\otimes(y+z)+\mathbf{i} \TT^2\otimes(y-z)+2 \TT^3\otimes x,\\
&\GG(M):=\GG^1\otimes d+\overline{\GG}^1\otimes a-\GG^2\otimes b+\overline{\GG}^2\otimes c.
\end{align*}
Then every element of $V$ (resp. $V\otimes R$) can be written as
$$\LL(r)+\TT(X)+\GG(M),$$
where $r\in k, X\in\frak{sl}_2(k), M\in \mathrm{Mat}_2(k)$ (resp. $r\in R, X\in\frak{sl}_2(R), M\in \mathrm{Mat}_2(R)$). Furthermore,  the $\lambda$-bracket on $\mathcal{F}\otimes_{k}{\mathcal{R}}$ satisfies the following relations: for $r,r'\in R$, $X,X_1,X_2\in\frak{sl}_2(R)$, and $M,N\in \mathrm{Mat}_2(R),$
\begin{align*}
\/[\LL(r)_\lambda \LL(r')]&=(\partial+2\lambda)\LL(rr')+2\LL(\delta(r)r'),\\
\/[\LL(r)_\lambda \TT(X)]&=(\partial+\lambda)\TT(rX)+\TT(\delta(r)X),\\
\/[\TT(X_1)_\lambda \TT(X_2)]&=\TT([X_1,X_2]),\\
\/[\LL(r)_\lambda \GG(M)]&=\left(\partial+\frac{3}{2}\lambda\right)\GG(rM)+\frac{3}{2}\GG(\delta(r)M),\\
\/[\TT(X)_\lambda \GG(M)]&=\GG(XM),\\
\/[\GG(M)_\lambda \GG(N)]&=2\LL(\mathrm{tr}(MN^\dag))+(\partial+2\lambda)\TT(MN^\dag-NM^\dag)\\
&\quad+2\TT(\delta(M)N^\dag-N\delta(M^\dag)).
\end{align*}
where $\mathrm{tr}:\mathrm{Mat}_2(R)\rightarrow R$ is the trace map, and
\begin{equation}
\begin{pmatrix}a&b\\c&d\end{pmatrix}^\dag:=\begin{pmatrix}d&-b\\-c&a\end{pmatrix}\label{eq:involution}
\end{equation}
is the standard sympletic involution on $\mathrm{Mat}_2(R)$.

\section{The group functor $\mathbf{GrAut}(\mathcal{F})$}
\label{sec:grauto}
We will compute the group $\mathbf{GrAut}(\mathcal{F})(\mathcal{R})$ for a $k$--differential ring $\mathcal{R}=(R,\delta)$ in this section. The main results will be Theorem~\ref{thm:exact}, Propositions~\ref{pro:surj1} and~\ref{pro:surj2}.

\begin{lemma}
\label{lem:SL2R}
For an arbitrary $k$--differential ring $\mathcal{R}=(R,\delta)$, there is a group homomorphism
$$\iota_{\mathcal{R}}:\mathbf{SL}_2(R)\times\mathbf{SL}_2(R_0)
\rightarrow\mathbf{GrAut}(\mathcal{F})(\mathcal{R}),\quad (A,B)\mapsto \theta_{A,B},$$
where $R_0=\ker\delta$, and $\theta_{A,B}$ is the $\mathcal{R}$--automorphism of $\mathcal{F}\otimes_{k}\mathcal{R}$ defined by
\begin{eqnarray*}
\theta_{A,B}(\LL)=\LL+\TT(\delta(A)A^{-1}),\\
\theta_{A,B}(\TT(X))=\TT(AX A^{-1}),\\
\theta_{A,B}(\GG(M))=\GG(AMB^{-1}),
\end{eqnarray*}
for $X\in\frak{sl}_2(k), M\in \mathrm{Mat}_2(k)$. In addition, the homomorphism $\iota_{\mathcal{R}}$ is functorial in $\mathcal{R}$.
\end{lemma}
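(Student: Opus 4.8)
The statement packages three claims: each $\theta_{A,B}$ is a well-defined element of $\mathbf{GrAut}(\mathcal{F})(\mathcal{R})$; the assignment $(A,B)\mapsto\theta_{A,B}$ is a group homomorphism; and the construction is functorial in $\mathcal{R}$. I would treat them in that order.

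\emph{Step 1: $\theta_{A,B}$ as an operator.} Since $\delta(1)=0$ one has $\widehat{\partial}^{\,n}(v\otimes 1)=\partial_{\mathcal F}^{\,n}v\otimes 1$, so $\{\widehat{\partial}^{\,n}(v\otimes 1)\mid n\ge 0,\ v\text{ in a }k\text{-basis of }V\}$ is an $R$-basis of $\mathcal{F}\otimes_k R$. Hence there is a unique $R$-linear endomorphism $\theta_{A,B}$ of $\mathcal{F}\otimes_k R$ with $\theta_{A,B}(\widehat{\partial}^{\,n}(v\otimes 1))=\widehat{\partial}^{\,n}(\theta_{A,B}(v\otimes 1))$, the values $\theta_{A,B}(v\otimes 1)$ being prescribed by the displayed formulas. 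One checks that these values make sense (in particular $\delta(A)A^{-1}$ and $AXA^{-1}$ are traceless, by Jacobi's formula $\operatorname{tr}(A^{-1}\delta(A))=\delta(\det A)/\det A=0$), that $\theta_{A,B}$ respects parity and carries $V\otimes_k R$ into itself, and that $\theta_{A,B}\widehat{\partial}=\widehat{\partial}\theta_{A,B}$ (clear on the basis, then $R$-linearly since $\widehat{\partial}$ is a $\delta$-derivation). I would also record the induced formulas on all of $V\otimes_k R$: $\theta_{A,B}(\LL(r))=\LL(r)+\TT(r\,\delta(A)A^{-1})$, $\theta_{A,B}(\TT(X))=\TT(AXA^{-1})$, $\theta_{A,B}(\GG(M))=\GG(AMB^{-1})$ for $r\in R$, $X\in\frak{sl}_2(R)$, $M\in R^{2\times 2}$.

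\emph{Step 2: $\theta_{A,B}$ preserves the $\lambda$-bracket.} Because $\theta_{A,B}$ is $k$-linear, commutes with $\widehat{\partial}$, and the $\lambda$-bracket is $k$-bilinear and sesquilinear in $\widehat{\partial}$, it is enough to check $[\theta_{A,B}(x)_\lambda\,\theta_{A,B}(y)]=\theta_{A,B}([x_\lambda y])$ when $x,y$ range over $\LL(r),\TT(X),\GG(M)$ --- i.e.\ that $\theta_{A,B}$ respects each of the six structural relations at the end of Section~\ref{sec:basic}. A point to be careful about: in those relations the symbol $\partial$ denotes $\partial_{\mathcal F}\otimes 1$, whereas $\theta_{A,B}$ commutes with $\widehat{\partial}=\partial_{\mathcal F}\otimes 1+1\otimes\delta$; the discrepancy $1\otimes\delta$ is exactly what makes the check succeed, because applied to $\TT(AYA^{-1})$ (with $Y$ over $k$) it produces $\TT\!\big(\delta(AYA^{-1})\big)=\TT\!\big([P,AYA^{-1}]\big)$, with $P:=\delta(A)A^{-1}$, and this term cancels against the $\TT(P)$-correction in $\theta_{A,B}(\LL)$. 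Substituting the Step-1 formulas, the three relations among $\LL,\TT$ reduce to $\operatorname{Ad}(A)$ being a Lie automorphism of $\frak{sl}_2(R)$, the Leibniz rule, $\delta(A^{-1})=-A^{-1}\delta(A)A^{-1}$ and $[\TT(P)_\lambda\TT(P)]=0$; the relations linking $\LL,\TT$ to $\GG$, and $[\GG(M)_\lambda\GG(N)]$, use in addition the $2\times 2$ matrix identities $M^\dag=-\operatorname{adj}(M)$, $\operatorname{adj}(XY)=\operatorname{adj}(Y)\operatorname{adj}(X)$, $\operatorname{adj}(A)=A^{-1}$, $\operatorname{adj}(B)=B^{-1}$ --- so that $(AMB^{-1})^\dag=BM^\dag A^{-1}$, $M^\dag$-bilinear expressions are $\operatorname{Ad}(A)$-equivariant, and $\operatorname{tr}(MN^\dag)$ is $\theta$-invariant --- and the polarized Cayley--Hamilton identity $MN^\dag+NM^\dag=\operatorname{tr}(MN^\dag)\,I$, together with the hypothesis $B\in\mathbf{SL}_2(R_0)$, i.e.\ $\delta(B)=0$. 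The $[\GG_\lambda\GG]$ case is the most intricate --- there the trace term, the $\partial$-term and the $\delta$-term of the relation all enter the comparison --- and is where I expect the real work to be.

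\emph{Step 3: homomorphism, invertibility, functoriality.} Checking on the generators $\LL(r),\TT(X),\GG(M)$ gives $\theta_{A,B}\circ\theta_{A',B'}=\theta_{AA',\,BB'}$; the only nonroutine point is on $\LL$, where it comes down to the cocycle identity $\delta(AA')(AA')^{-1}=\delta(A)A^{-1}+A\big(\delta(A')A'^{-1}\big)A^{-1}$ together with $\theta_{A,B}(\TT(Y))=\TT(AYA^{-1})$. Since $\theta_{I,I}=\mathrm{id}$, each $\theta_{A,B}$ is invertible with inverse $\theta_{A^{-1},B^{-1}}$, hence is a genuine $\mathcal{R}$-automorphism of $\mathcal{F}\otimes_k\mathcal{R}$ lying, by Step~1, in $\mathbf{GrAut}(\mathcal{F})(\mathcal{R})$; thus $\iota_{\mathcal{R}}$ is a well-defined group homomorphism. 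Finally, a morphism $f\colon\mathcal{R}\to\mathcal{S}$ of $k$-differential rings restricts to $R_0\to S_0$, hence induces $\mathbf{SL}_2(R)\times\mathbf{SL}_2(R_0)\to\mathbf{SL}_2(S)\times\mathbf{SL}_2(S_0)$, and the square formed with $\iota_{\mathcal{R}},\iota_{\mathcal{S}}$ and the base-change map on $\mathbf{GrAut}(\mathcal{F})$ commutes because $f$ intertwines the two derivations, so $f(\delta_R(A)A^{-1})=\delta_S(f(A))f(A)^{-1}$. The principal obstacle throughout is the $[\GG_\lambda\GG]$ computation of Step~2.
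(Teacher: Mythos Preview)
Your proposal is correct and follows essentially the same route as the paper: extend the prescribed values on $V$ to an $R$-linear, $\widehat{\partial}$-commuting map; verify bracket preservation on generators (the paper cites \cite[Lemma~3.1(ii)]{KLP} to reduce to $\xi,\eta\in V$, while you work on $V\otimes_k R$ directly), using exactly the same $\dag$-identities and the hypothesis $\delta(B)=0$, with the $[\GG_\lambda\GG]$ case carried out in detail; then deduce the homomorphism property from the cocycle identity $\delta(AA')(AA')^{-1}=\delta(A)A^{-1}+A(\delta(A')A'^{-1})A^{-1}$, invertibility from $\theta_{A^{-1},B^{-1}}$, and functoriality from $f\circ\delta_R=\delta_S\circ f$. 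Your added care about $\operatorname{tr}(\delta(A)A^{-1})=0$ and the $\partial$ versus $\widehat{\partial}$ bookkeeping is welcome but does not change the argument.
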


\begin{proof}
These formulas define a homomorphism $\theta_{A,B}$ of $R$--modules $V\otimes_{k}R\rightarrow V\otimes_{k}R$. It is extended to a homomorphism of $R$--modules $\mathcal{F}\otimes_{k}\mathcal{R}\rightarrow\mathcal{F}\otimes_{k}\mathcal{R}$ which preserves the $\Z/2\Z$--grading and satisfies $\widehat{\partial}\circ\theta_{A,B}=\theta_{A,B}\circ\widehat{\partial}$.
This map is also denoted by $\theta_{A,B}$.

To show $\theta_{A,B}$ is a homomorphism of Lie conformal superalgebras, by Lemma~3.1 (ii) in \cite{KLP}, it suffices to show
\begin{align}
\theta_{A,B}([\xi\otimes1_\lambda\eta\otimes1])=[\theta_{A,B}(\xi\otimes1)_\lambda\theta_{A,B}(\eta\otimes1)]\label{eq:preprod}
\end{align}
for all $\xi,\eta\in V$. This can be accomplished by a direct computation.

For instance, let $M,N\in \mathrm{Mat}_2(k)$, then
\begin{align*}
&\theta_{A,B}([\GG(M)_\lambda\GG(N)])\\
&=\theta_{A,B}\left(2\LL(\mathrm{tr}(MN^\dag))+(\partial+2\lambda)\TT(MN^\dag-NM^\dag)\right)\\
&=2\LL(\mathrm{tr}(MN^\dag))+2\TT(\mathrm{tr}(MN^\dag)\delta(A)A^{-1})\\
&\quad+(\widehat{\partial}+2\lambda)\TT(A(MN^\dag-NM^\dag)A^{-1}),\\
&[\theta_{A,B}(\GG(M))_\lambda\theta_{A,B}(\GG(N))]\\
&=[\GG(AMB^{-1})_\lambda\GG(ANB^{-1})]\\
&=2\LL(\mathrm{tr}(AMB^{-1}(ANB^{-1})^\dag))\\
&\quad+(\partial+2\lambda)\TT(AMB^{-1}(ANB^{-1})^\dag-ANB^{-1}(AMB^{-1})^\dag)\\
&\quad+2\TT(\delta(AMB^{-1})(ANB^{-1})^\dag-ANB^{-1}\delta(AMB^{-1})^\dag).
\end{align*}
For the standard sympletic involution as defined in (\ref{eq:involution}), we have
\begin{itemize}
\item $(MN)^{\dag}=N^{\dag}M^{\dag}$ for $M,N\in \mathrm{Mat}_2(R)$.
\item $MN^\dag+NM^\dag=\mathrm{tr}(MN^\dag)I$ for $M,N\in \mathrm{Mat}_2(R)$, where $I$ is the identity matrix.
\item $A^{-1}=A^\dag$ for $A\in \mathbf{SL}_2(R)$.
\end{itemize}
Hence, $AMB^{-1}(ANB^{-1})^\dag=AMB^{-1}BN^\dag A^{-1}=AMN^\dag A^{-1}$. Noting that $\delta(B)=0$ and $\delta(A^{-1})=-A^{-1}\delta(A)A^{-1}$, we obtain
\begin{align*}
&2(\delta(AMB^{-1})(ANB^{-1})^\dag-ANB^{-1}\delta(AMB^{-1})^\dag)\\
&=2(\delta(A)MB^{-1}BN^\dag A^{-1}-ANB^{-1}BM^\dag\delta(A^{-1}))\\
&=\delta(A)(MN^\dag-NM^\dag) A^{-1}+A(MN^\dag-NM^\dag)\delta(A^{-1})\\
&\quad+\delta(A)(MN^\dag+NM^\dag) A^{-1}-A(MN^\dag+NM^\dag)\delta(A^{-1})\\
&=\delta(A(MN^\dag-NM^\dag)A^{-1})+\mathrm{tr}(MN^\dag)(\delta(A)A^{-1}-A\delta(A^{-1}))\\
&=\delta(A(MN^\dag-NM^\dag)A^{-1})+2\mathrm{tr}(MN^\dag)\delta(A)A^{-1}.
\end{align*}
It follows that
$$\theta_{A,B}([\GG(M)_\lambda\GG(N)])=[\theta_{A,B}(\GG(M))_\lambda\theta_{A,B}(\GG(N))].$$

Similarly, it is easy to verify that the equation (\ref{eq:preprod}) holds for all $\xi,\eta\in V$. Hence, $\theta_A$ is a homomorphism of $\mathcal{R}$--Lie conformal superalgebras. Then $\theta_{A,B}\circ\theta_{A^{-1},B^{-1}}=\theta_{A^{-1},B^{-1}}\circ\theta_{A,B}=\mathrm{id}_{\mathcal{F}}$ implies that $\theta_{A,B}$ is an automorphism of the $\mathcal{R}$--Lie conformal superalgebra $\mathcal{F}\otimes_k\mathcal{R}$. Hence, we obtain $\theta_{A,B}\in\mathbf{GrAut}(\mathcal{F})(\mathcal{R})$ since $\theta_{A,B}(V\otimes_{k}R)\subseteq V\otimes_{k}R$.

To prove $(A,B)\mapsto\theta_{A,B}$ defines a group homomorphism, it suffices to show $\theta_{A_1A_2,B_1B_2}=\theta_{A_1,B_1}\circ\theta_{A_2,B_2}$ for $A_1,A_2\in\mathbf{SL}_2(R), B_1,B_2\in\mathbf{SL}_2(R_0)$. By \cite[Proposition~3.1]{KLP}, this can be easily done by verifying $$\theta_{A_1A_2,B_1B_2}(\xi\otimes1)=\theta_{A_1,B_1}\circ\theta_{A_2,B_2}(\xi\otimes1)$$
for $\xi\in V$. For instance,
\begin{align*}
\theta_{A_1A_2,B_1B_2}(\LL)&=\LL+\TT(\delta(A_1A_2)(A_1A_2)^{-1})\\
&=\LL+\TT((\delta(A_1)A_2+A_1\delta(A_2))A_2^{-1}A_1^{-1})\\
&=\LL+\TT(\delta(A_1)A_1^{-1}+A_1\delta(A_2)A_2^{-1}A_1^{-1}),\\
\theta_{A_1,B_1}\circ\theta_{A_2,B_2}(\LL)&=\theta_{A_1,B_1}(\LL+\TT(\delta(A_2)A_2^{-1}))\\
&=\LL+\TT(\delta(A_1)A_1^{-1})+\TT(A_1\delta(A_2)A_2^{-1}A_1^{-1}).
\end{align*}

In summary, we obtain a group homomorphism $$\iota_{\mathcal{R}}:\mathbf{SL}_2(R)\times\mathbf{SL}_2(R_0)\rightarrow\mathbf{GrAut}(\mathcal{F})(\mathcal{R}),$$
for each $k$--differential ring $\mathcal{R}$ which is functorial in $\mathcal{R}$.
\end{proof}

\begin{lemma}
\label{lem:center}
For every $k$--differential ring $\mathcal{R}=(R,\delta)$,
$$\ker(\iota_{\mathcal{R}})\cong\boldsymbol{\mu_2}(R_0),$$
where $R_0=\ker\delta$ and $\boldsymbol{\mu_2}(R_0)$ is diagonally embedded into $\mathbf{SL}_2(R)\times\mathbf{SL}_2(R_0)$. Additionally, these isomorphisms are functorial in $\mathcal{R}$.
\end{lemma}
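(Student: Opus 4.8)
The plan is to read the kernel off directly from the explicit formulas defining $\theta_{A,B}$ and then to exhibit an obvious functorial identification of it with $\boldsymbol{\mu}_2(R_0)$. By the uniqueness statement \cite[Proposition~3.1]{KLP} already used in the proof of Lemma~\ref{lem:SL2R}, an element of $\mathbf{GrAut}(\mathcal{F})(\mathcal{R})$ is trivial if and only if it fixes $\LL$ together with $\TT(X)$ and $\GG(M)$ for all $X\in\frak{sl}_2(k)$ and all $M\in k^{2\times2}$ (since these span $V$ over $k$). Hence $(A,B)\in\ker(\iota_{\mathcal{R}})$ precisely when
$$\TT(\delta(A)A^{-1})=0,\qquad \TT(AXA^{-1})=\TT(X)\ (X\in\frak{sl}_2(k)),\qquad \GG(AMB^{-1})=\GG(M)\ (M\in k^{2\times2}).$$

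The second step is to discard the maps $\TT$ and $\GG$. Both $\TT\colon\frak{sl}_2(R)\to\mathcal{F}\otimes_kR$ and $\GG\colon R^{2\times2}\to\mathcal{F}\otimes_kR$ are injective and $R$--linear; this is immediate from the formulas defining them, as $2$ and $\mathbf{i}$ are units of $R$ and the coordinates appearing there are coefficients of distinct free $k[\partial]$--generators of $\mathcal{F}$. So the three conditions become: $\delta(A)A^{-1}=0$, equivalently $\delta(A)=0$, i.e. $A\in\mathbf{SL}_2(R_0)$; $AXA^{-1}=X$ for every $X\in\frak{sl}_2(k)$, whence $A$ commutes with $e,f,h$ and therefore with all of $\frak{sl}_2(R)$; and $\GG(AB^{-1})=\GG(I)$, i.e. $AB^{-1}=I$, taking $M=I\in k^{2\times2}$.

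The third step is elementary: commuting with $e,f,h$ forces $A$ to be a scalar matrix $A=\lambda I$; then $\det A=1$ gives $\lambda^2=1$, and $\delta(A)=0$ gives $\lambda\in R_0$, so $\lambda\in\boldsymbol{\mu}_2(R_0)$, while $AB^{-1}=I$ gives $B=\lambda I$. Conversely, for any $\lambda\in\boldsymbol{\mu}_2(R_0)$, the defining formulas together with $\delta(\lambda)=0$ and $\lambda^2=1$ show at once that $\theta_{\lambda I,\lambda I}$ fixes $\LL$ and every $\TT(X)$ and $\GG(M)$, hence $\theta_{\lambda I,\lambda I}=\mathrm{id}$. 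Therefore $\ker(\iota_{\mathcal{R}})=\{(\lambda I,\lambda I)\mid\lambda\in\boldsymbol{\mu}_2(R_0)\}$, and $\lambda\mapsto(\lambda I,\lambda I)$ is a group isomorphism $\boldsymbol{\mu}_2(R_0)\xrightarrow{\ \sim\ }\ker(\iota_{\mathcal{R}})$. Functoriality in $\mathcal{R}$ is then formal: a morphism $f\colon\mathcal{R}\to\mathcal{S}$ of $k$--differential rings sends $R_0$ into $S_0$ and $\lambda I$ to $f(\lambda)I$, and $\iota$ is functorial by Lemma~\ref{lem:SL2R}, so $f$ intertwines these isomorphisms.

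I do not foresee a genuine obstacle here; the only steps requiring any care are the injectivity of $\TT$ and $\GG$ (which is exactly what pins down the system of equations to be solved) and the centralizer computation forcing $A$ to be scalar. If one prefers to avoid citing \cite[Proposition~3.1]{KLP}, the reduction to checking triviality on $V\otimes_kR$ can be done by hand: an $R$--linear, $\widehat{\partial}$--equivariant endomorphism that is the identity on $V\otimes_kR$ fixes every $\partial^n(v)\otimes r$ ($v\in V$, $r\in R$, $n\geqslant0$) by induction on $n$, using $\widehat{\partial}(v\otimes r)=\partial(v)\otimes r+v\otimes\delta(r)$, and these elements span $\mathcal{F}\otimes_kR$ over $k$.
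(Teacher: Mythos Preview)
Your proof is correct and follows essentially the same approach as the paper's: both read off the conditions from the explicit formulas for $\theta_{A,B}$ on $\TT(X)$ and $\GG(M)$, force $A$ to be a scalar $aI$ with $a^2=1$ via the centralizer computation, and then conclude $B=A$. The only minor variation is that you extract $a\in R_0$ from the $\LL$-condition $\delta(A)=0$, whereas the paper reads it off from the hypothesis $B\in\mathbf{SL}_2(R_0)$; you are also a bit more explicit about the injectivity of $\TT,\GG$ and about functoriality, which the paper leaves implicit.
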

\begin{proof}
Let $(A,B)\in\ker(\iota_{\mathcal{R}})$, where $A\in\mathbf{SL}_2(R)$ and $B\in\mathbf{SL}_2(R_0)$. Since $\theta_{A,B}=\mathrm{id}$,
$$\TT(X)=\theta_{A,B}(\TT(X))=\TT(AXA^{-1}),$$
for all $X\in\frak{sl}_2(k)$. Hence, $AX=XA$ for all $X\in\frak{sl}_2(k)$. It follows that $A=aI_2$ for some $a\in\boldsymbol{\mu}_2(R)$.

Since
$$\GG(M)=\theta_{A,B}(\GG(M))=\GG(AMB^{-1}),$$
for all $M\in \mathrm{Mat}_2(k)$, we obtain $aM=AM=MB$ for all $M\in \mathrm{Mat}_2(k)$, and hence $B=aI_2$ and $a\in R_0$ as $B\in\mathbf{SL}_2(R_0)$. Therefore, $(A,B)=(aI_2,aI_2)$ for $a\in\boldsymbol{\mu}_2(R_0)$.

Conversely, for $a\in\boldsymbol{\mu}_2(R_0)$, it can be checked that $(aI_2,aI_2)\in\ker(\iota_{\mathcal{R}})$. Hence, $\ker(\iota_{\mathcal{R}})\cong\boldsymbol{\mu}_2(R_0)$.
\end{proof}

Summarizing Lemmas \ref{lem:SL2R} and \ref{lem:center}, we obtain the following theorem:
\begin{theorem}
\label{thm:exact}
For every $k$--differential ring $\mathcal{R}=(R,\delta)$, there is an exact sequence of groups
\begin{equation}
1\rightarrow\boldsymbol{\mu}_2(R_0)\rightarrow
\mathbf{SL}_2(R)\times\mathbf{SL}_2(R_0)\xrightarrow{\iota_{\mathcal{R}}}
\mathbf{GrAut}(\mathcal{F})(\mathcal{R}),\label{eq:exact}
\end{equation}
where $R_0=\ker\delta$. Furthermore, the exact sequence is functorial in $\mathcal{R}$.\qed
\end{theorem}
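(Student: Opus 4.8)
The plan is simply to assemble the two preceding lemmas into the asserted sequence; there is no new computation. Lemma~\ref{lem:SL2R} already supplies the group homomorphism $\iota_{\mathcal{R}}$ together with its functoriality in $\mathcal{R}$, and Lemma~\ref{lem:center} identifies $\ker(\iota_{\mathcal{R}})$ with $\boldsymbol{\mu}_2(R_0)$ functorially. So the remaining work is to exhibit the left-hand map, check exactness at the first two spots (note that no claim is made at $\mathbf{GrAut}(\mathcal{F})(\mathcal{R})$, consistent with $\iota_{\mathcal{R}}$ not being surjective in general), and say precisely what functoriality of the whole sequence means.

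First I would take the left-hand map $\boldsymbol{\mu}_2(R_0)\to\mathbf{SL}_2(R)\times\mathbf{SL}_2(R_0)$ to be $a\mapsto(aI,aI)$, i.e. the composite of the isomorphism $\boldsymbol{\mu}_2(R_0)\xrightarrow{\sim}\ker(\iota_{\mathcal{R}})$ of Lemma~\ref{lem:center} with the inclusion $\ker(\iota_{\mathcal{R}})\hookrightarrow\mathbf{SL}_2(R)\times\mathbf{SL}_2(R_0)$. Exactness at $\boldsymbol{\mu}_2(R_0)$ is the injectivity of this map, which is immediate ($aI=I$ forces $a=1$), or alternatively just the fact that Lemma~\ref{lem:center} gives an isomorphism onto its image. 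Exactness at $\mathbf{SL}_2(R)\times\mathbf{SL}_2(R_0)$ is precisely the equality $\mathrm{im}\bigl(a\mapsto(aI,aI)\bigr)=\ker(\iota_{\mathcal{R}})$, which is exactly the content of Lemma~\ref{lem:center}. This establishes the exact sequence (\ref{eq:exact}) for each fixed $\mathcal{R}$.

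For functoriality I would fix a morphism $f:\mathcal{R}=(R,\delta_R)\to\mathcal{S}=(S,\delta_S)$ in $k\text{\bf-drng}$ and check that the ladder obtained by applying (\ref{eq:exact}) to $\mathcal{R}$ and to $\mathcal{S}$ commutes. The one point worth a sentence is that $f$ restricts to a $k$-algebra homomorphism $R_0=\ker\delta_R\to S_0=\ker\delta_S$, since $f\circ\delta_R=\delta_S\circ f$; this is what makes $\boldsymbol{\mu}_2(-_0)$ and $\mathbf{SL}_2(-_0)$ functors on $k\text{\bf-drng}$ and the naturality squares meaningful. Commutativity of the square involving $\iota$ is the functoriality asserted in Lemma~\ref{lem:SL2R}; commutativity of the left square follows from the functoriality of $\boldsymbol{\mu}_2(R_0)\cong\ker(\iota_{\mathcal{R}})$ asserted in Lemma~\ref{lem:center} together with the evident naturality of the inclusions $\ker(\iota_{\mathcal{R}})\hookrightarrow\mathbf{SL}_2(R)\times\mathbf{SL}_2(R_0)$ under the componentwise map induced by $f$.

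I do not expect any genuine obstacle: once Lemmas~\ref{lem:SL2R} and~\ref{lem:center} are in hand, the theorem is a formal consequence. The only thing demanding care is the bookkeeping observation that a morphism of $k$-differential rings induces a morphism on the kernels of the derivations, so that every functor appearing in (\ref{eq:exact}) is honestly a functor on $k\text{\bf-drng}$ and the functoriality claim is not vacuous.
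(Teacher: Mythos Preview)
Your proposal is correct and matches the paper's own treatment: the paper gives no separate argument for the theorem beyond the sentence ``From lemmas~\ref{lem:SL2R}--\ref{lem:center}, we obtain the following theorem,'' so your assembly of those two lemmas into the exact sequence, together with the routine functoriality check, is exactly what is intended.
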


In general, $\iota_{\mathcal{R}}$ fails to be surjective. However, it has properties analogous to the universal surjectivity of the quotient morphisms of $k$--group schemes. More precisely, we have the following Propositions~\ref{pro:surj1} and~\ref{pro:surj2}. We call an extension $\mathcal{R}=(R,\delta_R)\rightarrow\mathcal{S}=(S,\delta_S)$ of $k$--differential rings {\it\'{e}tale} if the homomorphism $R\rightarrow S$ of rings is \'{e}tale.

\begin{proposition}
\label{pro:surj1}
Let $\mathcal{R}=(R,\delta_R)$ be a $k$--differential ring with $R$ an integral domain. For every $\varphi\in\mathbf{GrAut}(\mathcal{F})(\mathcal{R})$, there is an \'{e}tale extension $\mathcal{S}=(S,\delta_S)$ of $\mathcal{R}$, an element $A\in\mathbf{SL}_2(S)$, and an element $B\in\mathbf{SL}_2(S_0)$ such that
$$\iota_{\mathcal{S}}(A,B)=\theta_{A,B}=\varphi_{\mathcal{S}},$$
where $S_0=\ker\delta_S$, and $\varphi_{\mathcal{S}}$ is the image of $\varphi$ under the induces group homomorphism $\mathbf{GrAut}(\mathcal{F})(\mathcal{R})\rightarrow\mathbf{GrAut}(\mathcal{F})(\mathcal{S})$.
\end{proposition}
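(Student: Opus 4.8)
The plan is to decode an arbitrary $\varphi\in\mathbf{GrAut}(\mathcal{F})(\mathcal{R})$ into linear-algebraic data, show this data lives in $\mathbf{SL}_2$ up to a square-root ambiguity, and then kill the ambiguity by passing to an étale cover. First I would restrict $\varphi$ to the subspaces $V\otimes_k R$ spanned by the generators. Since $\varphi$ commutes with $\widehat{\partial}$ and preserves the $\Z/2\Z$-grading and the $\lambda$-brackets, it must respect the structure encoded by the relations at the end of Section~\ref{sec:basic}. In particular $\varphi$ preserves the $k[\partial]$-submodule generated by $\TT^1,\TT^2,\TT^3$ (this is the unique copy of $\mathrm{Cur}(\frak{sl}_2)$ sitting inside, detected by the bracket relations with $\LL$ and by $[\TT(X_1)_\lambda\TT(X_2)]=\TT([X_1,X_2])$), so $\varphi$ induces an automorphism $X\mapsto \Phi(X)$ of $\frak{sl}_2(R)$ as a Lie conformal algebra over $\mathcal{R}$; by the standard fact that $R$-linear Lie algebra automorphisms of $\frak{sl}_2(R)$ are inner (given by conjugation by some $A\in\mathbf{PGL}_2(R)$, equivalently $\mathbf{SL}_2$ up to $\boldsymbol{\mu}_2$) one gets $\varphi(\TT(X))=\TT(AXA^{-1})$ for a well-defined class in $\mathbf{PGL}_2(R)$. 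The hypothesis that $R$ is an integral domain is used here to make the ``up to $\boldsymbol{\mu}_2$'' statement clean and to guarantee $\mathbf{SL}_2(R)\to\mathbf{PGL}_2(R)$ behaves well after an étale localization.

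Next I would use the odd part. Since $[\TT(X)_\lambda \GG(M)]=\GG(XM)$, the action of $\varphi$ on $\GG(M)$ must intertwine left multiplication by $X$ with left multiplication by $AXA^{-1}$; writing $\varphi(\GG(M))=\GG(\Psi(M))$ for an $R$-linear $\Psi$ on $R^{2\times2}$, this forces $\Psi(XM)=AXA^{-1}\Psi(M)$, hence $\Psi(M)=A\cdot(\text{right multiplication})\cdot$, i.e. $\varphi(\GG(M))=\GG(AMC)$ for some $C\in\mathbf{GL}_2(R)$. The relation $[\LL(r)_\lambda\GG(M)]=(\partial+\tfrac32\lambda)\GG(rM)+\tfrac32\GG(\delta(r)M)$ together with $\varphi(\LL)=\LL+\TT(Y)$ (for some $Y\in\frak{sl}_2(R)$, as forced by $[\LL_\lambda\LL]$ and $[\LL_\lambda\TT]$) pins down $Y=\delta(A)A^{-1}$ up to the same center, and the relation $[\GG(M)_\lambda\GG(N)]=2\LL(\mathrm{tr}(MN^\dag))+\cdots$ forces, after comparing the $\LL$- and $\TT$-components, that $C=B^{-1}$ with $\delta(B)=0$ and $\det(A)=\det(B)$ up to a scalar, i.e. $B\in\mathbf{GL}_2(R_0)$ and $AMB^{-1}(ANB^{-1})^\dag=AMN^\dag A^{-1}$; this last identity, using $M^\dag$-arithmetic from Lemma~\ref{lem:SL2R}'s proof, is exactly what makes $\det A/\det B$ act trivially, so after rescaling we may take $A\in\mathbf{SL}_2$, $B\in\mathbf{SL}_2(R_0)$, \emph{provided square roots of the relevant units exist}.

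That proviso is the one genuine obstruction, and it is where the étale extension enters. Carrying out the above over $R$ itself, one obtains $A\in\mathbf{GL}_2(R)$, $B\in\mathbf{GL}_2(R_0)$ with $\varphi=\theta_{A,B}$ in the obvious extended sense, and $\det A,\det B$ lie in $R^\times$, $R_0^\times$ with $\det A=\det B$ (after normalizing via the $\mathbf{PGL}_2$ ambiguity). Set $u=\det A=\det B\in R_0^\times$. Let $S=R[x]/(x^2-u)$; since $2$ is invertible and $u$ is a unit, $R\to S$ is étale, and $\delta_R$ extends uniquely to a derivation $\delta_S$ of $S$ with $\delta_S(x)=0$ because $x^2=u\in\ker\delta_R$, so $x\in S_0$ and $\mathcal{S}=(S,\delta_S)$ is an étale extension of $\mathcal{R}$ in $k\text{\bf-drng}$. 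Over $S$ replace $A$ by $x^{-1}A\in\mathbf{SL}_2(S)$ and $B$ by $x^{-1}B\in\mathbf{SL}_2(S_0)$; since $\theta_{A,B}$ is unchanged under simultaneously scaling $A$ and $B$ by the same central unit (this is exactly the computation $AMB^{-1}\mapsto AMB^{-1}$), we get $\varphi_{\mathcal{S}}=\theta_{x^{-1}A,\,x^{-1}B}=\iota_{\mathcal{S}}(x^{-1}A,\,x^{-1}B)$, as desired. The main work, and the step I expect to be most delicate, is the decoding argument in the first two paragraphs: verifying that the $\lambda$-bracket relations genuinely force $\varphi$ to have the shape $\theta_{A,B}$ with $A,B$ scalar multiples of $\mathbf{SL}_2$-matrices and $Y=\delta(A)A^{-1}$, rather than something more exotic — this rests on the rigidity of $\frak{sl}_2$ and on the nondegeneracy of the pairing $M,N\mapsto\mathrm{tr}(MN^\dag)$, and is presumably where the integral-domain hypothesis is really consumed.
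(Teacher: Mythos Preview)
Your overall strategy---decode $\varphi$ into matrix data over $R$, then pass to an \'etale cover only to normalise determinants---is different from the paper's, which passes to the \'etale cover \emph{first}: it invokes the fppf/\'etale surjectivity of the quotient morphism $\mathbf{SL}_2\to\mathbf{Aut}(\frak{sl}_2)$ (citing \cite{DG}) to obtain $A\in\mathbf{SL}_2(S)$ straight away, lifts $\delta_R$ to $\delta_S$ by the standard \'etale lifting lemma, and only then reads off $B\in\mathbf{SL}_2(S_0)$ from the odd bracket relations. Your route is more explicit, and the final step (adjoining $\sqrt{u}$ with $u=\det A=\det B\in R_0^\times$) is clean. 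But there is a genuine gap earlier.

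The problematic sentence is ``Carrying out the above over $R$ itself, one obtains $A\in\mathbf{GL}_2(R)$.'' You have correctly identified that $\overline{\varphi}$ gives a well-defined element of $\mathbf{PGL}_2(R)=\mathbf{Aut}(\frak{sl}_2)(R)$, but lifting this to $\mathbf{GL}_2(R)$ is obstructed by the boundary map $\mathbf{PGL}_2(R)\to H^1_{\text{\'et}}(R,\mathbf{G}_m)=\mathrm{Pic}(R)$, which is not trivial for a general integral domain (think of the coordinate ring of a curve of positive genus, or a Dedekind domain with nontrivial class group). So you cannot, in general, find an honest matrix $A\in\mathbf{GL}_2(R)$ realising $\overline{\varphi}$ by conjugation, and without that your subsequent extraction of $B$ and of $u=\det A$ does not get off the ground. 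The fix is to pass to an \'etale (even Zariski) cover already at this step to trivialise the Picard class; but once you accept that, you are essentially doing what the paper does, and it is cleaner to lift directly to $\mathbf{SL}_2(S)$ rather than to $\mathbf{GL}_2$ and then adjoin a square root.

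Two smaller points. First, if $A\in\mathbf{GL}_2(R)$ with $\det A\notin R_0$, then $\delta(A)A^{-1}$ has trace $\delta(\det A)/\det A\neq 0$ and is not in $\frak{sl}_2(R)$, so ``$Y=\delta(A)A^{-1}$'' needs a scalar correction; this wrinkle disappears once $A\in\mathbf{SL}_2$. Second, the integral-domain hypothesis is actually consumed at the very first step---showing that the $\LL$-coefficient $r$ of $\varphi(\LL)$ satisfies $r^2=r$ and hence $r=1$---rather than in the $\frak{sl}_2$-rigidity or the nondegeneracy of $\mathrm{tr}(MN^\dag)$.
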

\begin{proof}
We first write $\varphi(\LL)=\LL(r)+\TT(X_0)$,
where $r\in R, X_0\in\frak{sl}_2(R)$. Then
\begin{align*}
\varphi([\LL_\lambda\LL])&=(\widehat{\partial}+2\lambda)(\LL(r)+\TT(X_0)),\\
[\varphi(\LL)_\lambda\varphi(\LL)]&=(\widehat{\partial}+2\lambda)(\LL(r^2)+\TT(rX_0)).
\end{align*}
We deduce from $\varphi([\LL_\lambda\LL])=[\varphi(\LL)_\lambda\varphi(\LL)]$ that $r^2=r$ and $rX_0=X_0$. Since $R$ is an integral domain, $r=0$ or $1$. If $r=0$, we obtain $X_0=rX_0=0$, and so $\varphi(\LL)=0$. This contradicts the injectivity of $\varphi$. Hence, $r=1$, i.e.,
\begin{equation}
\varphi(\LL)=\LL+\TT(X_0).\label{eq:vir}
\end{equation}

Next we write $\varphi(\TT(\sigma^i))=\LL(r_i)+\TT(X_i)$ for $r_i\in R, X_i\in\frak{sl}_2(R),i=1,2,3$. Thus,
\begin{align*}
\varphi([\LL_\lambda\TT(\sigma^i)])&=(\widehat{\partial}+\lambda)(\LL(r_i)+\TT(X_i)),\\
[\varphi(\LL)_\lambda\varphi(\TT(\sigma^i))]&=(\partial+2\lambda)\LL(r_i)+(\partial+\lambda)\TT(X_i)\\
&\quad+\lambda\TT(r_iX_0)+\TT(r_i\delta(X_0))+\TT([X_0,X_i]).
\end{align*}
We deduce from $\varphi([\LL_\lambda\TT(\sigma^i)])=[\varphi(\LL)_\lambda\varphi(\TT(\sigma^i))], i=1,2,3$ that $r_i=0$ and $\delta(X_i)=[X_0,X_i]$, $i=1,2,3$. It follows that $\varphi(\TT(\sigma^i))=\TT(X_i), i=1,2,3$. Hence, $\varphi(\mathcal{B}\otimes_k\mathcal{R})\subseteq\mathcal{B}\otimes_k\mathcal{R}$ and $\varphi|_{\mathcal{B}\otimes_{k}\mathcal{R}}$ is an automorphism of $\mathcal{B}\otimes_{k}\mathcal{R}$.

Recall that $\mathcal{F}_{\even}$ has a subalgebra $\mathcal{B}=k[\partial]\TT^1\oplus k[\partial]\TT^2\oplus k[\partial]\TT^3$, which is isomorphic to $\mathrm{Cur}(\frak{sl}_2(k))$. Thus $\mathcal{B}\otimes_{k}\mathcal{R}\cong\mathrm{Cur}(\frak{sl}_2(k))\otimes_{k}\mathcal{R}$ as $\mathcal{R}$--Lie conformal superalgebras. By Corollary~3.17 of \cite{KLP}, there is an $R$-linear automorphism $\overline{\varphi}$ of the Lie algebra  $\frak{sl}_2(k)\otimes_{k}R=\frak{sl}_2(R)$ such that $\varphi(\TT(X))=\TT(\overline{\varphi}(X))$ for all $X\in\frak{sl}_2(R)$.

It is known that $\mathbf{SL}_2(R)$ acts on $\frak{sl}_2(k)\otimes_kR$ functorially via conjugation and there is a quotient morphism $\pi:\mathbf{SL}_2\rightarrow\mathbf{Aut}(\frak{sl}_2(k))$ of $k$-group schemes. From \cite[III, \S1, 2.8 and 3.2]{DG}, there exist an \'{e}tale extension $S$ of $R$ and $A\in\mathbf{SL}_2(S)$ such that
\begin{equation}
\overline{\varphi}_S(X)=AXA^{-1},\quad X\in\frak{sl}_2(S).\label{eq:sl2conj}
\end{equation}

Since $R\rightarrow S$ is an \'{e}tale homomorphism, by \cite[Chapter 0, Corollary~20.5.8]{grothendieck1964ega4}, there is a unique $k$--derivation $\delta_S$ of $S$ extending $\delta_R$. Hence, $\mathcal{S}=(S,\delta_S)$ is an \'{e}tale extension of $\mathcal{R}$.

Now we consider the image $\varphi_{\mathcal{S}}$ of $\varphi$ in $\mathbf{GrAut}(\mathcal{F})(\mathcal{S})$. From (\ref{eq:vir}) and (\ref{eq:sl2conj}), we obtain
\begin{align*}
\varphi_{\mathcal{S}}(\LL)=\LL+\TT(X_0),\text{ and }\varphi_{\mathcal{S}}(\TT(\sigma^i))=\TT(A\sigma^iA^{-1}), i=1,2,3.
\end{align*}
Then we deduce from $[\varphi_{\mathcal{S}}(\LL)_\lambda\varphi_{\mathcal{S}}(\TT(\sigma^i))]
=(\widehat{\partial}+\lambda)\varphi_{\mathcal{S}}(\TT(\sigma^i))$ that $$\delta_S(A\sigma^iA^{-1})=[X_0,A\sigma^iA^{-1}], i=1,2,3.$$
Hence, a direct computation shows that $X_0=\delta_S(A)A^{-1}$.

Let $\psi=\varphi_{\mathcal{S}}\circ\theta_{A,I}^{-1}$. Then $\psi\in\mathbf{GrAut}(\mathcal{F})(\mathcal{S})$ and $\psi|_{\mathcal{F}_{\even}\otimes_k\mathcal{S}}=\mathrm{id}$. Next we consider $\psi|_{\mathcal{F}_{\odd}\otimes_{k}S}$. Suppose
$$\psi(\GG(M))=\GG(\nu(M)),\quad M\in \mathrm{Mat}_2(S),$$
where $\nu:\mathrm{Mat}_2(S)\rightarrow \mathrm{Mat}_2(S)$ is a bijective $S$--linear map. Now we deduce from
$$\psi([\TT(X)_\lambda\GG(M)])=[\psi(\TT(X))_\lambda\psi(\GG(M))]$$
that $X\cdot\nu(M)=\nu(XM)$ for all $X\in\frak{sl}_2(S)$ and $M\in \mathrm{Mat}_2(S)$. Then a straightforward computation shows that there is an element $B\in\mathbf{GL}_2(S)$ such that $\nu(M)=MB^{-1}$ for all $M\in \mathrm{Mat}_2(S)$.

Next we show $B\in\mathbf{SL}_2(S_0)$. Let $M,N\in \mathrm{Mat}_2(k)$, then
\begin{align*}
\psi([\GG(M)_{(1)}\GG(N)])&=2\TT(MN^\dag-NM^\dag),\\
[\psi(\GG(M))_{(1)}\psi(\GG(N))]&=2\TT(MB^{-1}(NB^{-1})^\dag-NB^{-1}(MB^{-1})^\dag)\\
&=2\TT(\det(B^{-1})(MN^\dag-NM^\dag)).
\end{align*}
It follows that $\det(B^{-1})=1$, so $B\in\mathbf{SL}_2(S)$.

For $M\in \mathrm{Mat}_2(k)$, we consider
\begin{align*}
\psi([\GG(M)_\lambda\LL])&=\left(\frac{1}{2}\widehat{\partial}+\frac{3}{2}\lambda\right)\GG(MB^{-1}),\\
[\psi(\GG(M))_\lambda\psi(\LL)]&=\left(\frac{1}{2}\partial+\frac{3}{2}\lambda\right)\GG(MB^{-1})+\frac{3}{2}\GG(M\delta(B^{-1})).
\end{align*}
These yield that $\delta(B^{-1})=0$, and hence $B\in\mathbf{SL}_2(S_0)$. Therefore, $\psi=\varphi_{\mathcal{S}}\circ\theta_{A,I}^{-1}=\theta_{I,B}$, i.e., $\varphi_{\mathcal{S}}=\theta_{I,B}\circ\theta_{A,I}=\theta_{A,B}$.
\end{proof}

\begin{proposition}
\label{pro:surj2}
Let $\mathcal{R}=(R,\delta)$ be a $k$--differential ring. If $R$ is an integral domain and the \'{e}tale cohomology set $\mathrm{H}_{\text{\'{e}t}}^1(R,\boldsymbol{\mu}_2)$ is trivial, then $\iota_{\mathcal{R}}$ is surjective.
\end{proposition}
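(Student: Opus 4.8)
The plan is to prove surjectivity of $\iota_{\mathcal{R}}$ by faithfully flat (indeed \'{e}tale) descent, with Proposition~\ref{pro:surj1} supplying the local surjectivity and the triviality of $\mathrm{H}^1_{\text{\'{e}t}}(R,\boldsymbol{\mu}_2)$ killing the resulting descent obstruction. Fix $\varphi\in\mathbf{GrAut}(\mathcal{F})(\mathcal{R})$ and let $\mathcal{S}=(S,\delta_S)$ be the \'{e}tale extension of $\mathcal{R}$ produced by Proposition~\ref{pro:surj1}, which we may assume to be faithfully flat over $R$, together with $A\in\mathbf{SL}_2(S)$ and $B\in\mathbf{SL}_2(S_0)$ such that $\iota_{\mathcal{S}}(A,B)=\varphi_{\mathcal{S}}$. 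Set $\mathcal{S}\otimes_{\mathcal{R}}\mathcal{S}=(S\otimes_R S,\,\delta_S\otimes 1+1\otimes\delta_S)$; since $S\otimes_R S$ is again \'{e}tale over $R$, the uniqueness of the derivation extending $\delta_R$ (used already in the proof of Proposition~\ref{pro:surj1}) shows that this is an \'{e}tale extension of $\mathcal{R}$ and that the two structure maps $p_1,p_2\colon\mathcal{S}\to\mathcal{S}\otimes_{\mathcal{R}}\mathcal{S}$ are morphisms of $k$--differential rings. Because $\varphi_{\mathcal{S}}$ is the image of $\varphi\in\mathbf{GrAut}(\mathcal{F})(\mathcal{R})$, functoriality of $\iota$ and of $\mathbf{GrAut}(\mathcal{F})$ gives that $\bigl(p_1(A),p_1(B)\bigr)$ and $\bigl(p_2(A),p_2(B)\bigr)$ have the same image $\varphi_{\mathcal{S}\otimes_{\mathcal{R}}\mathcal{S}}$ under $\iota_{\mathcal{S}\otimes_{\mathcal{R}}\mathcal{S}}$, so $\bigl(p_1(A)^{-1}p_2(A),\,p_1(B)^{-1}p_2(B)\bigr)\in\ker\iota_{\mathcal{S}\otimes_{\mathcal{R}}\mathcal{S}}$. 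By Lemma~\ref{lem:center} this kernel is the diagonal copy of $\boldsymbol{\mu}_2\bigl((S\otimes_R S)_0\bigr)$, so there is a \emph{single} $a\in\boldsymbol{\mu}_2\bigl((S\otimes_R S)_0\bigr)$ with $p_1(A)^{-1}p_2(A)=aI=p_1(B)^{-1}p_2(B)$.

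Next I would verify that $a$ is a \v{C}ech $1$--cocycle for the cover $\mathrm{Spec}(S)\to\mathrm{Spec}(R)$ valued in $\boldsymbol{\mu}_2$: pulling the identity $aI=p_1(A)^{-1}p_2(A)$ back to $S\otimes_R S\otimes_R S$ along the three partial comultiplications and cancelling telescopically in $\mathbf{SL}_2$ gives $p_{12}(a)\,p_{23}(a)=p_{13}(a)$. The differential structure plays no role here: $a^2=1$ forces $2a\,\delta(a)=0$, hence $\delta(a)=0$, so $\boldsymbol{\mu}_2$ of a $k$--differential ring coincides with $\boldsymbol{\mu}_2$ of the underlying ring, and $a$ defines a $\boldsymbol{\mu}_2$--torsor over $R$ in the ordinary \'{e}tale sense. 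By hypothesis this torsor is trivial, hence has an $R$--point, and pulling that point back to $S$ exhibits $a$ as a coboundary, $a=p_1(b)\,p_2(b)^{-1}$ for some $b\in\boldsymbol{\mu}_2(S)=\boldsymbol{\mu}_2(S_0)$. Replacing $(A,B)$ by $(bA,bB)$ does not change the image under $\iota_{\mathcal{S}}$, since $(bI,bI)\in\ker\iota_{\mathcal{S}}$ by Lemma~\ref{lem:center}, and $bB\in\mathbf{SL}_2(S_0)$ as $\det(bB)=b^2\det B=1$; a short check from $a=p_1(b)\,p_2(b)^{-1}$ then gives $p_1(bA)=p_2(bA)$ and $p_1(bB)=p_2(bB)$.

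Faithfully flat descent for the affine $k$--scheme $\mathbf{SL}_2$ along $R\to S$ now yields $\widetilde{A},\widetilde{B}\in\mathbf{SL}_2(R)$ whose images in $\mathbf{SL}_2(S)$ are $bA$ and $bB$. Since $R\to S$ is injective and intertwines $\delta_R$ with $\delta_S$, and $bB$ is $\delta_S$--closed, $\delta_R(\widetilde{B})$ maps to $0$ in $\mathbf{SL}_2(S)$, so $\delta_R(\widetilde{B})=0$ and $\widetilde{B}\in\mathbf{SL}_2(R_0)$; thus $(\widetilde{A},\widetilde{B})\in\mathbf{SL}_2(R)\times\mathbf{SL}_2(R_0)$. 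Finally $\iota_{\mathcal{R}}(\widetilde{A},\widetilde{B})=\theta_{\widetilde{A},\widetilde{B}}$ has image $\iota_{\mathcal{S}}(bA,bB)=\varphi_{\mathcal{S}}$ in $\mathbf{GrAut}(\mathcal{F})(\mathcal{S})$; since $R\hookrightarrow S$ induces an inclusion $\mathcal{F}\otimes_k R\hookrightarrow\mathcal{F}\otimes_k S$, an $\mathcal{R}$--automorphism of $\mathcal{F}\otimes_k\mathcal{R}$ is determined by its base change to $\mathcal{S}$, whence $\varphi=\iota_{\mathcal{R}}(\widetilde{A},\widetilde{B})$ and $\iota_{\mathcal{R}}$ is surjective.

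The crucial, non-formal point is the use of Lemma~\ref{lem:center} to see that the descent data for the $A$--component and for the $B$--component are governed by \emph{one and the same} $\boldsymbol{\mu}_2$--valued cocycle $a$: the kernel of $\iota$ being the \emph{diagonal} $\boldsymbol{\mu}_2$ rather than $\boldsymbol{\mu}_2\times\boldsymbol{\mu}_2$ is exactly what lets a single coboundary $b$ correct both components simultaneously, after which the argument is bookkeeping. The only other thing to watch is the interaction between descent (most naturally performed over the underlying rings) and the derivations, and this is harmless because square roots of $1$ are automatically annihilated by any derivation — so the obstruction genuinely lives in $\mathrm{H}^1_{\text{\'{e}t}}(R,\boldsymbol{\mu}_2)$ — and because $\widetilde{B}$ can be descended first in $\mathbf{SL}_2(S)$ and only then checked to be $\delta_R$--closed.
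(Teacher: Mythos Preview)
Your proof is correct, but it takes a genuinely different route from the paper's. The paper does not go up to the \'{e}tale cover $\mathcal{S}$ of Proposition~\ref{pro:surj1} and descend back; instead it revisits the proof of Proposition~\ref{pro:surj1} and observes that the only step that forced a base change was lifting the induced automorphism $\overline{\varphi}\in\mathbf{Aut}(\frak{sl}_2(k))(R)$ to $\mathbf{SL}_2(R)$. The short exact sequence $1\to\boldsymbol{\mu}_2\to\mathbf{SL}_2\to\mathbf{Aut}(\frak{sl}_2(k))\to1$ yields the long exact sequence
\[
1\to\boldsymbol{\mu}_2(R)\to\mathbf{SL}_2(R)\to\mathbf{Aut}(\frak{sl}_2(k))(R)\to\mathrm{H}^1_{\text{\'{e}t}}(R,\boldsymbol{\mu}_2)\to\cdots,
\]
so the hypothesis $\mathrm{H}^1_{\text{\'{e}t}}(R,\boldsymbol{\mu}_2)=1$ gives $A\in\mathbf{SL}_2(R)$ directly, and the remaining computations in Proposition~\ref{pro:surj1} (determining $X_0=\delta(A)A^{-1}$ and then producing $B\in\mathbf{SL}_2(R_0)$ from $\psi=\varphi\circ\theta_{A,I}^{-1}$) already take place over $\mathcal{R}$ with no further extension needed. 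This is considerably shorter. Your descent argument, by contrast, treats Proposition~\ref{pro:surj1} as a black box and packages everything cohomologically; it has the merit of making transparent why a single $\boldsymbol{\mu}_2$--class controls the obstruction (via the diagonal kernel in Lemma~\ref{lem:center}), and it would adapt more readily to situations where one cannot easily re-open the proof of the local surjectivity statement.
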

\begin{proof}
Given $\varphi\in\mathbf{GrAut}(\mathcal{F})(\mathcal{R})$, as in the proof of Proposition~\ref{pro:surj1}, its restriction to $\mathcal{B}\otimes_k\mathcal{R}$ yields an $R$--linear automorphism $\overline{\varphi}$ of the Lie algebra $\frak{sl}_2(k)\otimes_kR$, where $\mathcal{B}=k[\partial]\TT^1\oplus k[\partial]\TT^2\oplus k[\partial]\TT^3\cong\mathrm{Cur}(\frak{sl}_2(k))$. It is known that there is an short exact sequence of $k$--group schemes
$$1\rightarrow\boldsymbol{\mu_2}\rightarrow\mathbf{SL}_2\rightarrow\mathbf{Aut}(\frak{sl}_2(k))\rightarrow1,$$
which yields a long exact sequences:
$$1\rightarrow \boldsymbol{\mu_2}(R)\rightarrow\mathbf{SL}_2(R)\rightarrow\mathbf{Aut}(\frak{sl}_2(k))(R)\rightarrow
\mathrm{H}^1_{\text{\'{e}t}}(R,\boldsymbol{\mu}_2)\rightarrow\cdots.$$
Hence, the triviality of $\mathrm{H}^1_{\text{\'{e}t}}(R,\boldsymbol{\mu}_2)$ yields that $\overline{\varphi}$ is the image of an element $A\in\mathbf{SL}_2(R)$. i.e.,
$$\varphi(\TT(X))=AXA^{-1}, X\in\frak{sl}_2(R).$$
Then the proof is completed by similar arguments as in Proposition~\ref{pro:surj1}.
\end{proof}

\bigskip

\begin{remark}
\label{rmk:cex_fgt_rep}
We consider the $k$-differential ring $\widehat{\mathcal{D}}'=(k[t^q|q\in\Bbb Q],0)$. Since $k[t^q|q\in\Bbb Q]$ is an integral domain and $\mathrm{H}_{\text{\'{e}t}}^1(\widehat{D},\boldsymbol{\mu_2})$ is trivial (see \cite[Theorem~2.9]{GiP}), we deduce from Proposition~\ref{pro:surj2} that
$$\mathbf{GrAut}(\mathcal{F})(\widehat{\mathcal{D}}')=\frac{\mathbf{SL}_2(\widehat{D})\times\mathbf{SL}_2(\widehat{D})}{\langle-I_2,-I_2\rangle},$$
which shows that the group functor $\mathbf{GrAut}(\mathcal{F})$ fails to be forgetfully representable by $\frac{\mathbf{SL}_2\times\mathbf{SL}_2(k)}{\langle -I_2,-I_2\rangle}$.
\end{remark}

Although the group functor $\mathbf{GrAut}(\mathcal{F})$ is not forgetfully representable, it is strongly related to the group scheme $\mathbf{SL}_2$. In the remaining of this section, we will discuss the representability of group functors involved in the exact sequence (\ref{eq:exact}). Recall that both $\mathbf{SL}_2$ and $\boldsymbol{\mu}_2$ are affine group schemes over $k$ (representable functors from  $k\text{\bf-rng}$ to $\text{\bf grp}$). More precisely,
$$\mathbf{SL}_2=\mathrm{Hom}_{k\text{\bf-rng}}(k[\mathbf{SL}_2],-)\text{ and }
\boldsymbol{\mu}_2=\mathrm{Hom}_{k\text{\bf-rng}}(k[\boldsymbol{\mu}_2],-),$$
where their coordinates rings are $k[\mathbf{SL}_2]=k[x_{11},x_{12},x_{21},x_{22}]/(x_{11}x_{22}-x_{12}x_{21}-1)$
and $k[\boldsymbol{\mu}_2]=k[x]/(x^2-1)$, respectively.

Analogous to the case of $N=1, 2, 3$ Lie conformal algebras \cite{CP}, both $\mathbf{SL}_2$ and $\boldsymbol{\mu}_2$ yield forgetfully representable functors:
\begin{align*}
&\mathbf{SL}_2^{\mathrm{fgt}}:k\text{\bf-drng}\xrightarrow{\mathrm{fgt}}k\text{\bf-rng}
\xrightarrow{\mathbf{SL}_2}\text{\bf grp}, &&\mathcal{R}=(R,\delta)\mapsto\mathbf{SL}_2(R).\\ &\boldsymbol{\mu}_2^{\mathrm{fgt}}:k\text{\bf-drng}\xrightarrow{\mathrm{fgt}}k\text{\bf-rng}
\xrightarrow{\boldsymbol{\mu}_2}\text{\bf grp},&&\mathcal{R}=(R,\delta)\mapsto\boldsymbol{\mu}_2(R).
\end{align*}

However, the group functor $\mathcal{R}=(R,\delta)\mapsto\mathbf{SL}_2(R_0), R_0=\ker\delta$ fails to be forgetfully representable. Theorem~\ref{thm:exact} suggests that we should consider the functor\footnote{For a $k$--differential ring $\mathcal{R}=(R,\delta)$, $R_0=\ker\delta$ is called the ring of constants of $\mathcal{R}$ \cite{Gillet}.}:
$$\mathrm{cst}:k\text{\bf-drng}\rightarrow k\text{\bf-rng}, \quad \mathcal{R}=(R,\delta)\mapsto R_0=\ker(\delta).$$
It also yields two group functors:
\begin{align*}
&\mathbf{SL}_2^{\mathrm{cst}}:k\text{\bf-drng}\xrightarrow{\mathrm{cst}}k\text{\bf-rng}
\xrightarrow{\mathbf{SL}_2}\text{\bf grp}, &&\mathcal{R}=(R,\delta)\mapsto\mathbf{SL}_2(R_0).\\ &\boldsymbol{\mu}_2^{\mathrm{cst}}:k\text{\bf-drng}\xrightarrow{\mathrm{cst}}k\text{\bf-rng}
\xrightarrow{\boldsymbol{\mu}_2}\text{\bf grp},&&\mathcal{R}=(R,\delta)\mapsto\boldsymbol{\mu}_2(R_0).
\end{align*}
Furthermore, $\mathbf{SL}_2^{\mathrm{cst}}$ and $\boldsymbol{\mu}_2^{\mathrm{cst}}$ are representable as functors from $k\text{\bf-drng}$ to {\bf grp} in the following sense: $k[\mathbf{SL}_2]$ together with the zero derivation form a $k$--differential ring $(k[\mathbf{SL}_2],0)$. Similarly, $(k[\boldsymbol{\mu}_2],0)$ is a $k$--differential ring. Then it is easy to observe that
$$\mathbf{SL}_2^{\mathrm{cst}}=\mathrm{Hom}_{k\text{\bf-drng}}\left((k[\mathbf{SL}_2],0),-\right)
\text{ and }
\boldsymbol{\mu}_2^{\mathrm{cst}}=\mathrm{Hom}_{k\text{\bf-drng}}\left((k[\boldsymbol{\mu}_2],0),-\right).$$

\begin{proposition}
$\boldsymbol{\mu}_2^{\mathrm{fgt}}=\boldsymbol{\mu}_2^{\mathrm{cst}}$.
\end{proposition}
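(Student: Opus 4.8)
The plan is to unwind both functors on an arbitrary $k$--differential ring $\mathcal{R}=(R,\delta)$ and observe that they coincide on the nose. By definition, $\boldsymbol{\mu}_2^{\mathrm{fgt}}(\mathcal{R})=\boldsymbol{\mu}_2(R)=\{a\in R\mid a^2=1\}$, while $\boldsymbol{\mu}_2^{\mathrm{cst}}(\mathcal{R})=\boldsymbol{\mu}_2(R_0)=\{a\in R_0\mid a^2=1\}$, where $R_0=\ker\delta$. Since $R_0\subseteq R$, the inclusion $\boldsymbol{\mu}_2^{\mathrm{cst}}(\mathcal{R})\subseteq\boldsymbol{\mu}_2^{\mathrm{fgt}}(\mathcal{R})$ is immediate, so the content of the statement is the reverse inclusion: every square root of $1$ in $R$ already lies in $R_0$.

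First I would take $a\in R$ with $a^2=1$ and apply $\delta$ to this relation. By the Leibniz rule, $0=\delta(1)=\delta(a^2)=2a\,\delta(a)$. Multiplying by $a$ and using $a^2=1$ gives $2\,\delta(a)=0$, and since $k$ has characteristic zero (so $2$ is invertible), we conclude $\delta(a)=0$, i.e. $a\in R_0$. This shows $\boldsymbol{\mu}_2(R)\subseteq\boldsymbol{\mu}_2(R_0)$, hence equality of the underlying sets; the group structures agree because both are induced by multiplication in $R$. Since this identification is plainly compatible with morphisms of $k$--differential rings, it is an equality of functors $k\text{\bf-drng}\rightarrow\text{\bf grp}$, which is exactly $\boldsymbol{\mu}_2^{\mathrm{fgt}}=\boldsymbol{\mu}_2^{\mathrm{cst}}$.

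There is essentially no obstacle here: the only point requiring care is the use of $\mathrm{char}\,k=0$ to cancel the factor of $2$, which is precisely why the analogous statement would fail in characteristic $2$. One could also phrase the argument representably: by the remarks preceding the proposition, $\boldsymbol{\mu}_2^{\mathrm{cst}}=\mathrm{Hom}_{k\text{\bf-drng}}((k[\boldsymbol{\mu}_2],0),-)$, and the computation above shows that any $k$--algebra map $k[x]/(x^2-1)\rightarrow R$ automatically commutes with the derivations (the zero one on the source, $\delta$ on the target, since the image of $x$ is killed by $\delta$), so the forgetful lift of $\boldsymbol{\mu}_2$ is corepresented by the same differential ring. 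Either formulation closes the argument in a few lines.
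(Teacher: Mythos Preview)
Your proof is correct and matches the paper's argument essentially verbatim: differentiate $a^2=1$ to get $2a\,\delta(a)=0$, then use $a^2=1$ and $\mathrm{char}\,k=0$ to conclude $\delta(a)=0$, with the reverse inclusion being obvious. The only cosmetic difference is that the paper cancels the $2$ first and then multiplies by $a$, whereas you do it in the opposite order.
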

\begin{proof}
Let $\mathcal{R}=(R,\delta)$ be a $k$--differential ring and $r\in\boldsymbol{\mu}_2^{\mathrm{fgt}}(\mathcal{R})$, then $r^2=1$. Thus $2r\delta(r)=0$, and so $r\delta(r)=0$, from which we obtain $\delta(r)=rr\delta(r)=0$, i.e., $r\in R_0$. This yields that $r\in\boldsymbol{\mu}_2(R_0)=\boldsymbol{\mu}_2^{\mathrm{cst}}(\mathcal{R})$. Therefore, $\boldsymbol{\mu}_2^{\mathrm{fgt}}(\mathcal{R})\subseteq\boldsymbol{\mu}_2^{\mathrm{cst}}(\mathcal{R})$. The reverse inclusion is obvious.
\end{proof}

\section{The group functor $\mathbf{Aut}(\mathcal{F})$}
\label{sec:auto}

In this section, we consider the relationship between $\mathbf{Aut}(\mathcal{F})$ and $\mathbf{GrAut}(\mathcal{F})$.

\begin{theorem}
\label{thm:auto_int}
Let $\mathcal{R}=(R,\delta)$ be a $k$--differential ring with $R$ an integral domain. Then
$$\mathbf{GrAut}(\mathcal{F})(\mathcal{R})=\mathbf{Aut}(\mathcal{F})(\mathcal{R}).$$
\end{theorem}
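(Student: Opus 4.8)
The plan is to show that every $\varphi\in\mathbf{Aut}(\mathcal{F})(\mathcal{R})$ automatically preserves $V\otimes_k R$, i.e.\ that the defining condition of $\mathbf{GrAut}(\mathcal{F})(\mathcal{R})$ is vacuous when $R$ is an integral domain. The strategy mimics the analysis already carried out in the proof of Proposition~\ref{pro:surj1}: exploit the $\Z_+$-grading implicit in the $\lambda$-brackets (the ``conformal weights'' $\LL\mapsto 2$, $\TT^i\mapsto 1$, $\GG^p,\overline{\GG}^p\mapsto\frac{3}{2}$) together with the fact that $\mathcal{F}$ is a free $k[\partial]$-module on $V$, to force the images of the generators back into $V\otimes_k R$. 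Concretely, write $\varphi(\LL)=\sum_{j\ge0}\widehat{\partial}^{(j)}(\ell_j)$ with $\ell_j\in V\otimes_kR$, and similarly for the images of $\TT^i$ and $\GG^p,\overline{\GG}^p$; the goal is to show all higher-order terms ($j\ge1$) vanish and that $\ell_0$ has the right ``weight''.

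First I would handle $\varphi(\LL)$. From $\varphi([\LL_\lambda\LL])=[\varphi(\LL)_\lambda\varphi(\LL)]$, comparing the highest power of $\lambda$ and using $(\mathrm{CS1})$ to track how $\widehat{\partial}$ and $\lambda$ interact, one finds that $\varphi(\LL)$ can have no $\widehat{\partial}$-terms of positive order and that its $\LL$-component must be idempotent; integrality of $R$ then forces $\varphi(\LL)=\LL(1)+\TT(X_0)$ for some $X_0\in\frak{sl}_2(R)$, exactly as in~(\ref{eq:vir}). (The $\GG$-components of $\varphi(\LL)$ are killed by parity.) Next, applying $\varphi$ to $[\LL_\lambda\TT^i]=(\partial+\lambda)\TT^i$ and to $[\LL_\lambda\GG^p]=(\partial+\tfrac32\lambda)\GG^p$, and again matching $\lambda$-degrees, forces $\varphi(\TT(X))\in\TT(\frak{sl}_2(R))$ and $\varphi(\GG(M))\in\GG(R^{2\times2})$ with no higher $\widehat{\partial}$-terms and no $\LL$-components; this is the step where weights $1$ and $\tfrac32$ being ``minimal'' in their respective eigenspaces does the work. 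Once this is established, $\varphi(V\otimes_kR)\subseteq V\otimes_kR$, hence $\varphi\in\mathbf{GrAut}(\mathcal{F})(\mathcal{R})$, and the reverse inclusion is by definition.

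The main obstacle I anticipate is the bookkeeping in the ``weight'' argument: unlike an honest grading over $k$, here one must argue with $R$-coefficients and the derivation $\delta$ entangled in the $n$-th products via the base-change formula, so statements like ``the image of $\GG^p$ has no component along $\widehat{\partial}^{(j)}\LL$'' require carefully expanding $[\varphi(\GG(M))_\lambda\varphi(\GG(N))]$ and isolating coefficients of $\widehat{\partial}^{(j)}\lambda^m$ for each generator. A clean way to organize this is to observe that $\mathcal{F}$ carries a $\tfrac12\Z_+$-valued conformal weight grading for which every structure constant is homogeneous, note that an automorphism over any $\mathcal{R}$ must respect the induced filtration by weight (since weight-raising is detected by $(\mathrm{CS1})$ and the leading $\lambda$-behaviour of brackets), and then run the above three computations to pin down the weight-$2$, weight-$1$, and weight-$\tfrac32$ pieces. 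A short remark reconciling this with Proposition~\ref{pro:surj1} — which already proves the analogous containment on $\mathcal{F}_{\even}$ for integral $R$ — lets one reuse that argument verbatim for the even part and only requires the new input for the odd generators $\GG^p,\overline{\GG}^p$.
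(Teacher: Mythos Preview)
There is a genuine gap in your order of operations. Your claim that the single relation $\varphi([\LL_\lambda\LL])=[\varphi(\LL)_\lambda\varphi(\LL)]$ already forces $\varphi(\LL)=\LL+\TT(X_0)$ (no $\widehat{\partial}$-terms of positive order) is false. Take any $Y\in\frak{sl}_2(R_0)$ and set $\Phi=\LL+\widehat{\partial}\,\TT(Y)$. A direct computation gives
\[
[\Phi_\lambda\Phi]=(\widehat{\partial}+2\lambda)\LL+(\widehat{\partial}+\lambda)^2\TT(Y)-\lambda^2\TT(Y)
=(\widehat{\partial}+2\lambda)\LL+\widehat{\partial}(\widehat{\partial}+2\lambda)\TT(Y)
=(\widehat{\partial}+2\lambda)\Phi,
\]
so the $[\LL_\lambda\LL]$-relation alone cannot exclude terms like $\widehat{\partial}\,\TT(Y)$ in $\varphi(\LL)$. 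Indeed, $\widehat{\partial}\,\TT(Y)$ has the same conformal weight $2$ as $\LL$, so your proposed weight/filtration argument does not separate them either; and that argument is in any case circular, since the weight grading is defined via the action of $\LL_{(1)}$ and you have not yet controlled $\varphi(\LL)$. Note also that Proposition~\ref{pro:surj1}, which you invoke, starts from the hypothesis $\varphi\in\mathbf{GrAut}(\mathcal{F})(\mathcal{R})$, i.e.\ $\varphi(\LL)\in V\otimes_kR$ is already assumed there---precisely what must be proved here.

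The paper resolves this by treating $\TT$ before $\LL$. Using $[\TT(\sigma^i)_\lambda\TT(\sigma^i)]=0$ it first shows that $\varphi(\TT(\sigma^i))$ has no $\LL$-component (this is where the integral-domain hypothesis kills a square), so $\varphi$ restricts to an automorphism of $\mathcal{B}\otimes_k\mathcal{R}\cong\mathrm{Cur}(\frak{sl}_2(k))\otimes_k\mathcal{R}$; then \cite[Corollary~3.17]{KLP} for current algebras forces $\varphi(\TT(\sigma^i))=\TT(X^i)$ with no $\widehat{\partial}$-terms. Only with this in hand does the relation $\varphi([\LL_\lambda\TT(\sigma^i)])=[\varphi(\LL)_\lambda\varphi(\TT(\sigma^i))]$ yield $[Y_m,X^i]=0$ for all $i$ and $m>0$, whence $Y_m=0$ and $\varphi(\LL)=\LL+\TT(Y_0)$. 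The odd generators are then handled, as you suggest, from $[\LL_\lambda\GG(M)]$. So the missing ingredients in your plan are (i) the use of $[\TT_\lambda\TT]=0$ to strip $\LL$-components from $\varphi(\TT)$, and (ii) the appeal to the current-algebra automorphism result to strip $\widehat{\partial}$-terms; both must precede the analysis of $\varphi(\LL)$.
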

\begin{proof}
Let $\varphi\in\mathbf{Aut}(\mathcal{F})(\mathcal{R})$. It suffices to show $\varphi(V\otimes_{k}R)\subseteq V\otimes_{k}R$.

Recall that $\mathcal{B}=k[\partial]\TT^1\oplus k[\partial]\TT^2\oplus k[\partial]\TT^3$. If we write
$$\varphi(\TT(\sigma^i))=\sum\limits_{m=0}^{M_i}\widehat{\partial}^mL(r_{im})+f_i,$$
$r_{im}\in R, f_i\in\mathcal{B}\otimes_k R$, $i=1,2,3$, then
\begin{align*}
&\varphi([\TT(\sigma^i)_\lambda\TT(\sigma^i)])=0,\\
&[\varphi(\TT(\sigma^i))_\lambda\varphi(\TT(\sigma^i))]\\
&\quad=\sum\limits_{m,n=0}^{M_i}(-\lambda)^m(\widehat{\partial}+\lambda)^n((\partial+2\lambda)\LL(r_{im}r_{in})+\LL(\delta(r_{im})r_{in}))+h_i,
\end{align*}
where $h_i\in k[\lambda]\otimes_k\mathcal{B}\otimes_k R$. By comparing the degree and coefficients of $\lambda$ in $$\varphi([\TT(\sigma^i)_\lambda\TT(\sigma^i)])=[\varphi(\TT(\sigma^i))_\lambda\varphi(\TT(\sigma^i))], i=1,2,3,$$ we obtain $M_i=0$ and $r_{iM_i}^2=0$, $i=1,2,3$. Thus $r_{iM_i}=0$ $i=1,2,3$ since $R$ is an integral domain, i.e., $\varphi(\TT(\sigma^i))\in\mathcal{B}\otimes_k R$. Since $\mathcal{B}\cong\mathrm{Cur}(\frak{sl}_2(k))$, by Corollary~3.17 in \cite{KLP}, $\varphi(\TT(\sigma^i))\subseteq (k\TT^1\oplus k\TT^2\oplus k\TT^3)\otimes_k R\subseteq V\otimes_k R$. More precisely, there is an $R$--linear automorphism of the Lie algebra $\alpha:\frak{sl}_2(R)\rightarrow\frak{sl}_2(R), \sigma^i\mapsto X^i, i=1,2,3$ such that $\varphi(\TT(\sigma^i))=\TT(X^i),i=1,2,3$.

A similar argument using $\varphi([\LL_\lambda\LL])=[\varphi(\LL)_\lambda\varphi(\LL)]$ yields that
$$\varphi(\LL)=\LL+\sum\limits_{m=0}^M\widehat{\partial}^m\TT(Y_m),$$
for $Y_m\in\frak{sl}_2(R)$. Then $\varphi([\LL_\lambda\TT(\sigma^i)])=[\varphi(\LL)_\lambda\varphi(\TT(\sigma^i))]$ implies $$[Y_m,X^i]=0,\text{ for } i=1,2,3, m>0.$$
Applying the automorphism $\alpha^{-1}$, we obtain
$$[\alpha^{-1}(Y_m),\sigma^i]=0,\text{ for } i=1,2,3,m>0.$$
Since $\{\sigma^i,i=1,2,3\}$ is a basis of $\frak{sl}_2(k)$ and $\alpha^{-1}(Y_m)\in\frak{sl}_2(R)$, we deduce that $\alpha^{-1}(Y_m)=0$ for $m>0$, and so $Y^m=0, m>0$. Hence, $\varphi(\LL)=\LL+\TT(Y_0)$, i.e., $\varphi(\LL)\in V\otimes_{k}R$.

Next we consider the odd part. By considering
$$\varphi([\LL_\lambda\GG(M)])=[\varphi(\LL)_\lambda\varphi(\GG(M))],$$ for $M\in \mathrm{Mat}_2(k)$, we obtain $\varphi(\GG(M))\in V\otimes_{k} R$. Hence, $\varphi\in\mathbf{GrAut}(\mathcal{F})(\mathcal{R})$.
\end{proof}

\begin{remark}
\label{rmk:nonintegral}
The assumption of integral domain in Theorem~\ref{thm:auto_int} is not superfluous. As an example, we consider the $k$--differential ring $\mathcal{R}=(k[\tau]/(\tau^2),0)$. In this situation, the $\mathcal{R}$--conformal superalgebra $\mathcal{F}\otimes_k\mathcal{R}$ has the automorphism $\varphi$ defined by
$$\varphi(\partial^{\ell}a\otimes r)=\partial^{\ell}a\otimes r+\partial^{\ell+1}a\otimes \bar{\tau}r,$$
for $\ell\in\Bbb{Z}_+, a\in V, r\in k[\tau]/(\tau^2),$ where $V$ is the $k$--vector space given in (\ref{eq:vectorspace}) and $\bar{\tau}$ is the canonical image of $\tau$ in $k[\tau]/(\tau^2)$. However, $\varphi$ is not contained in $\mathbf{GrAut}(\mathcal{F})(\mathcal{R})$, which shows that $\mathbf{Aut}(\mathcal{F})(\mathcal{R})\neq\mathbf{GrAut}(\mathcal{F})(\mathcal{R})$. An analogous example for the $N=2$ conformal superalgebra has been given in \cite[Remark~3.1]{CP}.
\end{remark}

\begin{corollary}
\label{cor:auto}
Let $\mathcal{R}=(R,\delta)$ be a $k$--differential ring such that $R$ is an integral domain and $\mathrm{H}_{\text{\'{e}t}}^1(R,\boldsymbol{\mu_2})$ is trivial. Then
$$\mathbf{Aut}(\mathcal{F})(\mathcal{R})
\cong\frac{\mathbf{SL}_2(R)\times\mathbf{SL}_2(R_0)}{\boldsymbol{\mu}_2(R_0)},$$
where $R_0=\ker\delta$.
\end{corollary}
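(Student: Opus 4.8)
The plan is to assemble the isomorphism directly from the results already at hand. By Theorem~\ref{thm:auto_int}, the hypothesis that $R$ is an integral domain gives $\mathbf{Aut}(\mathcal{F})(\mathcal{R})=\mathbf{GrAut}(\mathcal{F})(\mathcal{R})$, so it suffices to analyse $\mathbf{GrAut}(\mathcal{F})(\mathcal{R})$. Theorem~\ref{thm:exact} provides the exact sequence
$$1\rightarrow\boldsymbol{\mu}_2(R_0)\rightarrow\mathbf{SL}_2(R)\times\mathbf{SL}_2(R_0)\xrightarrow{\iota_{\mathcal{R}}}\mathbf{GrAut}(\mathcal{F})(\mathcal{R}),$$
which already identifies $\ker(\iota_{\mathcal{R}})$ with $\boldsymbol{\mu}_2(R_0)$ (Lemma~\ref{lem:center}). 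The only additional input needed is the surjectivity of $\iota_{\mathcal{R}}$, and this is exactly Proposition~\ref{pro:surj2}, whose hypotheses---$R$ an integral domain and $\mathrm{H}^1_{\text{\'et}}(R,\boldsymbol{\mu}_2)$ trivial---coincide with those of the corollary.

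First I would invoke Theorem~\ref{thm:auto_int} to replace $\mathbf{Aut}(\mathcal{F})(\mathcal{R})$ by $\mathbf{GrAut}(\mathcal{F})(\mathcal{R})$. Next I would apply Proposition~\ref{pro:surj2} to conclude that $\iota_{\mathcal{R}}\colon\mathbf{SL}_2(R)\times\mathbf{SL}_2(R_0)\rightarrow\mathbf{GrAut}(\mathcal{F})(\mathcal{R})$ is surjective. Combining this surjectivity with Lemma~\ref{lem:center}, which computes $\ker(\iota_{\mathcal{R}})\cong\boldsymbol{\mu}_2(R_0)$ as a subgroup of $\mathbf{SL}_2(R)\times\mathbf{SL}_2(R_0)$ (namely the diagonally embedded scalar matrices $(aI,aI)$ with $a^2=1$, $\delta(a)=0$), the first isomorphism theorem for groups yields
$$\mathbf{GrAut}(\mathcal{F})(\mathcal{R})\cong\frac{\mathbf{SL}_2(R)\times\mathbf{SL}_2(R_0)}{\boldsymbol{\mu}_2(R_0)},$$
and stringing the two identifications together gives the claim. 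One should also remark that, since all of Theorem~\ref{thm:exact}, Lemma~\ref{lem:center}, and $\iota_{\mathcal{R}}$ are functorial in $\mathcal{R}$, the isomorphism is functorial on the full subcategory of $k\text{\bf-drng}$ consisting of differential rings satisfying the two hypotheses.

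There is essentially no obstacle here: the corollary is a formal consequence of the preceding theorem and two propositions, and the proof amounts to citing them in the right order and invoking the first isomorphism theorem. The only point requiring the smallest care is to note that the subgroup $\boldsymbol{\mu}_2(R_0)$ appearing in the denominator is central in $\mathbf{SL}_2(R)\times\mathbf{SL}_2(R_0)$ (being scalar matrices), so the quotient is a group in the usual sense and the notation is unambiguous; this is immediate. Accordingly the written proof can be kept to a few lines.
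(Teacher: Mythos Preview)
Your proposal is correct and matches the paper's approach: the corollary is stated without proof in the paper, precisely because it is an immediate consequence of Theorem~\ref{thm:auto_int}, Proposition~\ref{pro:surj2}, and Theorem~\ref{thm:exact} (equivalently Lemma~\ref{lem:center}) via the first isomorphism theorem, exactly as you outline.
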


\begin{remark} \label{rmk:examples}\quad
\begin{itemize}
\item Since $k$ is an algebraically closed field of characteristic zero, $\mathrm{H}_{\text{\'{e}t}}^1(k,\boldsymbol{\mu_2})$ is trivial. Hence,
$$\mathrm{Aut}_{k\text{-conf}}(\mathcal{F})=\mathbf{Aut}(\mathcal{F})(k,0)\cong\frac{\mathbf{SL}_2(k)\times\mathbf{SL}_2(k)}{\langle(-I_2,-I_2)\rangle}.$$
\item For the $k$-differential ring $\widehat{\mathcal{D}}=(k[t^q|q\in\Bbb Q],\frac{d}{dt})$, the \'{e}tale cohomology set $\mathrm{H}_{\text{\'{e}t}}^1(\widehat{D},\boldsymbol{\mu_2})$ is trivial (\cite[Theorem~2.9]{GiP}) and $\widehat{D}_0=\ker(\frac{d}{dt})=k$. Hence,
    $$\mathrm{Aut}_{\widehat{\mathcal{D}}\text{-conf}}(\mathcal{F}\otimes_k\widehat{\mathcal{D}})=
    \mathbf{Aut}(\mathcal{F})(\widehat{\mathcal{D}})=\frac{\mathbf{SL}_2(\widehat{D})\times\mathbf{SL}_2(k)}{\langle(-I_2,-I_2)\rangle},$$
    which is the result of Proposition~3.69 in \cite{KLP}.
\end{itemize}
\end{remark}

\section{Comments on the functor $\mathbf{GrAut}(\mathcal{A})$}
\label{sec:grautodef}
Let $\mathcal{A}$ be an arbitrary Lie conformal superalgebra over $k$ such that $\mathcal{A}$ is a free $k[\partial]$--module of finite rank, then there is a finite-dimensional $k$--vector space $W$ such that $\mathcal{A}=k[\partial]\otimes_{k}W$ as $k[\partial]$--modules. Hence, we can define a subgroup of $\mathbf{Aut}(\mathcal{A})(\mathcal{R})$ for each $k$--differential ring $\mathcal{R}=(R,\delta)$:
$$\mathbf{GrAut}(\mathcal{A},W)(\mathcal{R})=\{\phi\in\mathbf{Aut}(\mathcal{A})(\mathcal{R})|\phi(k\otimes_k W\otimes_k R)\subseteq k\otimes_k W\otimes_k R\}.$$
This construction is also functorial in $\mathcal{R}$. Hence, $\mathbf{GrAut}(\mathcal{A},W)$ is also a functor from the category of $k$--differential rings into the category of groups. However, $W$ is not uniquely determined by $\mathcal{A}$. In other words, the group functor $\mathbf{GrAut}(\mathcal{A},W)$ may depend on the choice of $W$.

According to \cite[section 5.10]{K}, the $N=1,2,3$ Lie conformal superalgebra $\mathcal{K}_N$ can be realized as follows: $\mathcal{K}_N=k[\partial]\otimes_{k}\Lambda(N)$, where $\Lambda(N)$ is the Grassman superalgebra in $N$ variables, $\partial$ acts on $\mathcal{K}_N$ in the natural way, and the $n$-th product for each $n\in\Z_+$ is defined by
\begin{align*}
f_{(0)}g&=\left(\frac{1}{2}|f|-1\right)\partial\otimes fg+\frac{1}{2}(-1)^{|f|}\sum\limits_{i=1}^N(\partial_if)(\partial_i g)\\
f_{(1)}g&=\left(\frac{1}{2}(|f|+|g|)-2\right)fg,\\
f_{(n)}g&=0, \quad n\geqslant2,
\end{align*}
where $f,g\in\Lambda(N)$ are homogenous polynomials in $\xi_1,\cdots,\xi_N$ of degree $|f|$ and $|g|$ respectively, and $\partial_i$ is the derivative with respect to $\xi_i, i=1,\cdots,N$.

For the $N=1,2,3$ Lie conformal superalgebras $\mathcal{K}_N$, the particular choice that $W=\Lambda(N)$ has been considered in \cite{CP}. In this situation, $$\mathbf{GrAut}(\mathcal{K}_N,\Lambda(N))(\mathcal{R})=\mathbf{Aut}(\mathcal{K}_N)(\mathcal{R}),\quad N=1,2,3,$$
for every $k$--differential ring $\mathcal{R}=(R,\delta)$ with $R$ an integral domain.

For the $N=4$ Lie conformal superalgebra $\mathcal{F}$, we take $W=V$ to be the $k$--vector space as in (\ref{eq:vectorspace}) in Section~\ref{sec:basic}. It is known from Theorem~\ref{thm:auto_int} that $$\mathbf{GrAut}(\mathcal{F},V)(\mathcal{R})=\mathbf{Aut}(\mathcal{F})(\mathcal{R})$$
for every $k$--differential ring $\mathcal{R}=(R,\delta)$ with $R$ an integral domain.

However, for a given Lie conformal superalgebra $\mathcal{A}$ over $k$ and an arbitrary choice of $W$ such that $\mathcal{A}=k[\partial]\otimes_kW$, $\mathbf{GrAut}(\mathcal{A},W)(\mathcal{R})\subseteq\mathbf{Aut}(\mathcal{A})(\mathcal{R})$ may fail to be an equality even if $R$ is the field $k$.

For instance, considering the $N=2$ Lie conformal superalgebra $\mathcal{K}_2$, we have known from \cite{CP} that $W=\Lambda(2)$ is a good choice to define $\mathbf{GrAut}(\mathcal{K}_2,\Lambda(2))$. However, $\mathcal{K}_2$ is also realized as follows (see \cite[section 5.10]{K}):
$$\mathcal{K}_2=k[\partial]\otimes_{k}(\mathrm{Der}(\Lambda(1))\oplus\Lambda(1)),$$
where $\Lambda(1)=k\oplus k\xi$ is the Grassman superalgebra in one variable and $\mathrm{Der}(\Lambda(1))$ is the superalgebra of all derivations of $\Lambda(1)$. The $n$-th products for $n\in\Z_+$ on $\mathcal{K}_2$ are given by
$$\begin{array}{l}
a_{(n)}b=\delta_{n0}[a,b],\quad a_{(0)}f=a(f),\quad a_{(n)}f=-\delta_{n1}(-1)^{p(a)p(f)}fa,\text{ if }n\geqslant1,\\
f_{(0)}g=-\partial(fg),\quad f_{(n)}g=-2\delta_{j1}fg,\text{ if }n\geqslant1,
\end{array}$$
where $a,b\in\mathrm{Der}(\Lambda(1)), f,g\in\Lambda(1)$, and $p(a)$ (resp. $p(b)$) is the parity of $a$ (resp. $b$).

If we choose
\begin{eqnarray*}
W=W':=\mathrm{Der}(\Lambda(1))\oplus\Lambda(1)=\left(k\frac{d}{d\xi}\oplus k\xi\frac{d}{d\xi}\right)\oplus(k\oplus k\xi),
\end{eqnarray*}
then $\mathcal{K}_2=k[\partial]\otimes_{k}W'$. For such a $k$--vector subspace $W'\subseteq\mathcal{K}_2$, the subgroup functor $\mathbf{GrAut}(\mathcal{K}_2,W')$ is not necessarily equal to $\mathbf{Aut}(\mathcal{K}_2)$ when evaluating at a $k$-differential ring $\mathcal{R}=(R,\delta)$ with $R$ an integral domain. Indeed, the Lie conformal superalgebra $\mathcal{K}_2$ has the automorphism $\phi:\mathcal{K}_2\rightarrow\mathcal{K}_2$ defined by
$$\phi(1)=1-\partial\xi\frac{d}{d\xi}, \quad\phi(\xi)=\frac{d}{d\xi}, \quad\phi\left(\frac{d}{d\xi}\right)=\xi, \quad\phi\left(\xi\frac{d}{d\xi}\right)=-\xi\frac{d}{d\xi}.$$
Although $(k,0)$ is a $k$--differential ring and $k$ is a field, the automorphism $\phi$ is not contained $\mathbf{GrAut}(\mathcal{K}_2,W')(k)$.
Hence, $W'$ is not a suitable choice to define $\mathbf{GrAut}(\mathcal{K}_2)$.

\section*{Acknowledgments}
The author is grateful to Professor Arturo Pianzola for his valuable suggestions and to Valerie Budd for her careful reading of the manuscript. He thanks the referees for their helpful comments. Finally, the author appreciates the support of the University of Alberta and the Chinese Scholarship Council.

\end{document}